\newtheorem{theorem}{Theorem}
\newtheorem{conjecture}{Conjecture}
\newtheorem{lemma}{Lemma}
\theoremstyle{remark}
\newcommand{\N}{\mathbb N}
\author{Victor J.\ W.\ Guo}
\address{School of Mathematical Sciences, Huaiyin Normal University,
Huai'an 223300, Jiangsu, People's Republic of China}
\email{jwguo@hytc.edu.cn}
\thanks{The first author was partially supported by the National Natural
Science Foundation of China (grant 11771175).}
\author{Michael J.\ Schlosser}
\address{Fakult\"at f\"ur Mathematik, Universit\"at Wien,
Oskar-Morgenstern-Platz~1, A-1090 Vienna, Austria}
\email{michael.schlosser@univie.ac.at}
\title[Some new $q$-congruences for truncated basic series]{Some
new $q$-congruences for truncated\\ basic hypergeometric series}
\subjclass[2010]{Primary 33D15; Secondary 11A07, 11F33}
\keywords{basic hypergeometric series; supercongruences; $q$-congruences;
cyclotomic polynomial; Andrews' transformation; $q$-binomial theorem.}
\begin{document}

\begin{abstract}
We provide several new $q$-congruences for truncated basic
hypergeometric series, mostly of arbitrary order. Our results
include congruences modulo the square or the cube of a cyclotomic
polynomial, and in some instances, parametric generalizations
thereof. These are established by a variety of techniques including
polynomial argument, creative microscoping (a method recently
introduced by the first author in collaboration with Zudilin),
Andrews' multiseries generalization of the Watson transformation,
and induction. We also give a number of related conjectures
including congruences modulo the fourth power of a cyclotomic
polynomial.
\end{abstract}

\maketitle

\section{Introduction}
In 1914, Ramanujan \cite{Ramanujan} stated rather mysteriously a number
of formulas for $1/\pi$, including
\begin{equation*}
\sum_{k=0}^\infty \frac{(\frac{1}{2})_k^3}{k!^3 } (6k+1)\frac{1}{4^k}
=\frac{4}{\pi}. 
\end{equation*}
In 1997, Van Hamme \cite{Hamme} conjectured 13 interesting $p$-adic
analogues of Ramanujan's or Ramanujan-type formulas for $1/\pi$, such as
\begin{equation}\label{eq:p4}
\sum_{k=0}^{\frac{p-1}2} \frac{(\frac{1}{2})_k^3}{k!^3 } (6k+1)\frac{1}{4^k}
\equiv p(-1)^{\frac{p-1}2}\pmod{p^4},
\end{equation}
where $(a)_n=a(a+1)\cdots(a+n-1)$ denotes the Pochhammer symbol and $p$
is an odd prime. All of the 13 supercongruences
have been confirmed by different techniques up to now (see \cite{OZ,Swisher}).
For some informative background on Ramanujan-type supercongruences,
see Zudilin's paper \cite{Zud2009}.
During the past few years, $q$-analogues of congruences and supercongruences
have caught the interests of many authors (see, for example,
\cite{Gorodetsky,Guillera3,Guo2018,Guo-i,Guo-t,Guo-ef,Guo-gz,Guo-par,
Guo-m=d,GPZ,GJZ,GS,GS2,
GW,GZ14,GuoZu,GuoZu2,NP,PS,Straub,Tauraso1,Tauraso2,Zudilin}).
As made explicit in \cite{GuoZu}, $q$-supercongruences are related to
studying the asymptotic behaviour of $q$-series at roots of unity.
This hints towards an intrinsic connection to mock theta functions
and quantum modular forms (see e.g. \cite{FOR,Zag}).

Congruences of truncated hypergeometric series
modulo a high power of a prime such as in Equation \eqref{eq:p4} are special.
Similarly, in the setting of truncated basic hypergeometric series,
congruences modulo some power of a cyclotomic polynomial are special and,
already for the exponent being $\geqslant 2$, are
typically difficult to prove.

Recently, the first author \cite[Theorem 1.1]{Guo-m=d} proved that
for $n\equiv 4\pmod{5}$
\begin{equation*}
\sum_{k=0}^{n-1}\frac{(q;q^5)_k^5}{(q^5;q^5)_k^5}q^{5k}\equiv
0\pmod{\Phi_n(q)^2},
\end{equation*}
which, under the substitution $q\mapsto q^{-1}$, can be written as
\begin{equation*}
\sum_{k=0}^{n-1}\frac{(q;q^5)_k^5}{(q^5;q^5)_k^5}q^{15k}\equiv
0\pmod{\Phi_n(q)^2}.
\end{equation*}
It follows that for $n\equiv 4\pmod{5}$
\begin{equation}
\sum_{k=0}^{n-1}[10k+1]\frac{(q;q^5)_k^5}{(q^5;q^5)_k^5}q^{5k}\equiv
0\pmod{\Phi_n(q)^2}. \label{eq:guo}
\end{equation}
Here and in what follows, we adopt the standard $q$-notation:
$(a;q)_n=(1-a)(1-aq)\cdots (1-aq^{n-1})$ is the {\em $q$-shifted factorial};
$(a_1,a_2,\ldots,a_m;q)_n=(a_1;q)_n (a_2;q)_n\cdots (a_m;q)_n$
is a product of $q$-shifted factorials;
$[n]=[n]_q=1+q+\cdots+q^{n-1}$ is the {\em $q$-integer};
and $\Phi_n(q)$ denotes the $n$-th {\em cyclotomic polynomial} in $q$
(see \cite{Lang}), which may be defined as
\begin{align*}
\Phi_n(q)=\prod_{\substack{1\leqslant k\leqslant n\\ \gcd(n,k)=1}}(q-\zeta^k),
\end{align*}
where $\zeta$ is an $n$-th primitive root of unity.

We find that for $n\equiv 2\pmod{5}$ the $q$-congruence \eqref{eq:guo}
even holds modulo $\Phi_n(q)^3$. More generally we are able to extend
\eqref{eq:guo} to the following infinite family of $q$-congruences.
\begin{theorem}\label{thm:main-1}
Let $d\geqslant 5$ be an odd integer. Then
\begin{equation}
\sum_{k=0}^{n-1}[2dk+1]\frac{(q;q^d)_k^d}{(q^d;q^d)_k^d}q^{\frac{d(d-3)k}{2}}
\equiv
\begin{cases} 0\pmod{\Phi_n(q)^2}, &\text{if $n\equiv -1\pmod{d}$,}\\[5pt]
0\pmod{\Phi_n(q)^3}, &\text{if $n\equiv -\frac{1}{2}\pmod{d}$.}
\end{cases}  \label{eq:main-1}
\end{equation}
\end{theorem}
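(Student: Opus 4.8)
We plan to prove both congruences in \eqref{eq:main-1} by the method of \emph{creative microscoping}. First, introduce an extra parameter $a$ by replacing two of the $d$ factors $(q;q^d)_k/(q^d;q^d)_k$ in the summand with $(aq;q^d)_k/(aq^d;q^d)_k$ and $(q/a;q^d)_k/(q^d/a;q^d)_k$, giving the deformed truncated sum
\[
S_n(a):=\sum_{k=0}^{n-1}[2dk+1]\,
\frac{(aq,q/a;q^d)_k\,(q;q^d)_k^{d-2}}{(aq^d,q^d/a;q^d)_k\,(q^d;q^d)_k^{d-2}}\,q^{\frac{d(d-3)k}{2}},
\]
which is invariant under $a\mapsto 1/a$ and reduces to the left-hand side of \eqref{eq:main-1} at $a=1$. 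The central claim is that $S_n(a)\equiv 0\pmod{(1-aq^n)(a-q^n)}$ for every $n$ lying in either of the two residue classes modulo $d$. Since the polynomials $1-aq^n$ and $a-q^n$ are coprime, and since $a\mapsto 1/a$ interchanges them, this reduces to the single assertion $S_n(q^{-n})=0$. We would prove the latter by applying Andrews' multiseries generalization of the Watson transformation to $S_n(q^{-n})$ (a very-well-poised basic hypergeometric series in base $q^d$): the transformation rewrites it as a constant multiple of a multiple balanced series, and for $n\equiv-1$ or $n\equiv-\tfrac12\pmod d$ each summand of that multiple series turns out to carry a vanishing $q$-shifted factorial, so that the whole expression is $0$. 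Granting this, setting $a=1$ in $(1-aq^n)(a-q^n)\mid S_n(a)$ yields $(1-q^n)^2\mid S_n(1)$, hence $\Phi_n(q)^2\mid S_n(1)$; this already establishes the case $n\equiv-1\pmod d$.

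For the case $n\equiv-\tfrac12\pmod d$ one extra factor of $\Phi_n(q)$ must be gained. The plan is to show, additionally, that $S_n(a)\equiv 0\pmod{\Phi_n(q)}$ for generic $a$ when $n\equiv-\tfrac12\pmod d$. We would do this by a reflection argument: working modulo $\Phi_n(q)$ (so that $q^n\equiv 1$), pair the summand at index $k$ with the summand at the reflected index $n-1-k$, and verify, using the identity $[2dk+1]=1+q\,[2dk]$ together with the behaviour of $(aq;q^d)_k$, $(q/a;q^d)_k$ and $(q;q^d)_k$ under $q\mapsto q^{-1}$, that the two paired terms cancel modulo $\Phi_n(q)$; for indices near the top of the range, where the reflected index behaves differently, one checks directly that a factor $(q;q^d)_k$ has already acquired $\Phi_n(q)$. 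Then $\Phi_n(q)$ is coprime to $(1-aq^n)(a-q^n)$, so the two congruences combine to $S_n(a)\equiv 0\pmod{\Phi_n(q)(1-aq^n)(a-q^n)}$; specializing $a=1$ produces the factor $\Phi_n(q)(1-q^n)^2$, which is divisible by $\Phi_n(q)^3$, as required.

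The principal obstacle is the closed-form evaluation $S_n(q^{-n})=0$: one must identify precisely the very-well-poised structure (the base being $q^d$ with $d$ odd forces half-integer powers of $q$ that collapse into genuine $q$-shifted factorials only after the two ``square-root'' parameters of the skeleton are combined), select the right instance of Andrews' transformation, and confirm that the hypotheses $n\equiv-1$ and $n\equiv-\tfrac12\pmod d$ are exactly what make every term of the resulting multiple series vanish. A secondary technical point, easy to overlook, is to check that each quotient obtained by cancelling $\Phi_n(q)$ or $(1-aq^n)(a-q^n)$ is still a genuine polynomial, so that the two congruences may legitimately be multiplied; once these points are settled, the final specialization $a=1$ is immediate and completes the proof of Theorem~\ref{thm:main-1}.
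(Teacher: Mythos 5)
There is a genuine gap at the very heart of your plan: the vanishing claim $S_n(q^{-n})=0$ is false for the ``naturally'' deformed sum you define. Take $d=5$, $n=4$ ($\equiv-1\pmod 5$): at $a=q^{-n}$ your summand ratio becomes $(q^{-3};q^5)_k(q;q^5)_k^2\big/\bigl((q^9;q^5)_k(q^5;q^5)_k^2\bigr)$, no numerator factor ever vanishes (since $n-1\not\equiv 0\pmod d$ the sum does not even terminate), and a direct evaluation at, say, $q=2$ gives $S_4(q^{-4})\approx 0.88\neq 0$ --- the $k=0$ term dominates and nothing cancels it. This is precisely the point the authors flag in the introduction: the parameter $a$ must be inserted together with its \emph{powers} $a^{2},a^{3},\dots,a^{d-1}$ (and their reciprocals) spread over the $d$ factors, as in Theorems~\ref{thm:a-1} and \ref{thm:a-2}; only then does the specialization $a=q^{\pm n}$ produce a terminating sum (e.g.\ $(q^{1-(d-1)n};q^d)_k=0$ for $k>(dn-n-1)/d$ because $(d-1)n+1\equiv 0\pmod d$ when $n\equiv-1$) to which an identity can be applied. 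With only first powers of $a$ there is no such identity to prove, so the congruence modulo $(1-aq^n)(a-q^n)$ cannot be rescued.

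Two further discrepancies, secondary but worth noting. First, for $n\equiv-1\pmod d$ the paper does \emph{not} use Andrews' transformation: it rewrites the terminating specialization as $\sum_k(-1)^kq^{d\binom{N-k}{2}}\begin{bmatrix}N\\k\end{bmatrix}_{q^d}P(q^{dk})$ with $\deg P<N$ and invokes the $q$-binomial theorem \eqref{eq:qbino}; Andrews' transformation enters only in the $n\equiv-\frac12$ case, and there it is a vanishing \emph{prefactor} $(q^{d+1}/b_mc_m;q^d)_N=(q^{d-2n-1};q^d)_N=0$ of the transformed multiple series that kills everything, not a vanishing factor inside each summand. Second, your reflection for the extra factor of $\Phi_n(q)$ pairs $k$ with $n-1-k$; the correct pairing (Lemma~\ref{lem:2.2}) is $k\leftrightarrow(dn-2n-1)/d-k$, since $(dn-2n-1)/d$ is the largest index for which $(q;q^d)_k$ has not yet picked up a factor of $\Phi_n(q)$, and the terms beyond that index vanish individually rather than in pairs. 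The overall architecture you describe (two coprime moduli, $a\to1$) matches the paper, but without the higher powers of $a$ the key parametric congruences are simply not true.
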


Note that for $d\geqslant 7$ and $n\equiv -1\pmod{d}$ the above $q$-congruence
cannot be deduced directly from \cite[Theorem 1.1]{Guo-m=d} in the same way
as the $q$-congruence \eqref{eq:guo} is derived.
This is because the arguments $q^d$ and $q^{\frac{d(d-3)}{2}}$
are different for $d\geqslant 7$.
It should be pointed out that the $q$-congruence \eqref{eq:main-1} does not
hold for $d=3$.
Like many results given in \cite{GS}, Theorem~\ref{thm:main-1} has a
companion as follows.
\begin{theorem}\label{thm:main-2}
Let $d\geqslant 3$ be an odd integer and let $n>1$. Then
\begin{equation}
\sum_{k=0}^{n-1}[2dk-1]\frac{(q^{-1};q^d)_k^d}{(q^d;q^d)_k^d}
q^{\frac{d(d-1)k}{2}}
\equiv
\begin{cases} 0\pmod{\Phi_n(q)^2}, &\text{if $n\equiv 1\pmod{d}$,}\\[5pt]
0\pmod{\Phi_n(q)^3}, &\text{if $n\equiv \frac{1}{2}\pmod{d}$.}
\end{cases} \label{eq:main-2}
\end{equation}
\end{theorem}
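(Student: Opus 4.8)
The plan is to adapt the proof of Theorem~\ref{thm:main-1} via the substitution $q\mapsto q^{-1}$, taking care of the shift in parameters.  Concretely, in the sum defining \eqref{eq:main-1}, replace $q$ by $q^{-1}$ throughout.  Under this replacement $[2dk+1]_{q^{-1}} = q^{-2dk}[2dk+1]_q$, while $(q;q^d)_k^d \mapsto (q^{-1};q^{-d})_k^d = q^{-\binom{dk+1}{2}\cdot\text{(something)}}(\cdots)$ more precisely one uses $(a^{-1};q^{-1})_k = (a;q)_k\,(-a)^{-k}q^{-\binom k2}a^{-k}$-type identities; and similarly $(q^d;q^d)_k^d\mapsto (q^{-d};q^{-d})_k^d$.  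Collecting all the resulting powers of $q$, one checks that the summand of \eqref{eq:main-1} with $q\mapsto q^{-1}$ equals (up to an overall unit power of $q$, which does not affect divisibility by $\Phi_n(q)^j$ since $\Phi_n(q)\nmid q$) a constant multiple of the summand of \eqref{eq:main-2} after the relabelling $q^{-1}\rightsquigarrow q$; the condition $n\equiv -1\pmod d$ becomes $n\equiv 1\pmod d$ and $n\equiv -\tfrac12\pmod d$ becomes $n\equiv\tfrac12\pmod d$.  Finally, since $\Phi_n(q^{-1}) = q^{-\varphi(n)}\Phi_n(q)$ (for $n>1$), a congruence modulo $\Phi_n(q^{-1})^j$ is equivalent to one modulo $\Phi_n(q)^j$, so the two theorems are equivalent.

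First I would record the elementary $q$-shifted factorial reflection formulas, namely
\begin{equation*}
(q^{-1};q^{-d})_k = (q;q^d)_k\,(-1)^k q^{-dk^2+\frac{(d-2)k}{2}},\qquad
(q^{-d};q^{-d})_k = (q^d;q^d)_k\,(-1)^k q^{-d^2k^2+\frac{dk}{2}},
\end{equation*}
obtained by reversing each factor.  Then I would substitute into \eqref{eq:main-1}, simplify the exponent of $q$ in the $k$-th term, and observe that all the $(-1)^k$ factors cancel (there are $d$ copies each of two such factors, $d$ odd, giving $(-1)^{2dk}=1$) and that the net power of $q$ accumulated is \emph{independent of $k$}.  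This independence is the crucial computational check; it is what makes the two truncated sums proportional rather than merely termwise related.  After that the statement of Theorem~\ref{thm:main-2} drops out immediately from Theorem~\ref{thm:main-1}, together with the remark that the hypothesis $n>1$ is needed precisely so that $\Phi_n(q^{-1})$ and $\Phi_n(q)$ differ only by a unit (for $n=1$, $\Phi_1(q)=q-1$ and the argument $q^{\frac{d(d-1)k}{2}}$ degenerates).

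The main obstacle — really the only nontrivial point — is the bookkeeping of the $q$-powers: one must verify that
\[
-2dk \;-\; d\Bigl(-dk^2+\tfrac{(d-2)k}{2}\Bigr)\cdot(\text{from the numerator, }d\text{ copies})\;+\;d\Bigl(-d^2k^2+\tfrac{dk}{2}\Bigr)\;+\;\frac{d(d-3)k}{2}
\]
collapses, after also accounting for the sign change $2dk+1\to -(2dk-1)$ and the exponent $\frac{d(d-1)k}{2}$ appearing in \eqref{eq:main-2}, to a quantity with no $k$-dependence.  The $k^2$ terms cancel because the numerator contributes $-d\cdot d\,k^2$ and the denominator contributes $+d\cdot d^2 k^2$ — wait, these must be balanced by the relation between $(q;q^d)_k^d/(q^d;q^d)_k^d$ before and after, so one tracks the difference, and indeed $d\cdot(-dk^2) - d\cdot(-d^2k^2)$ does not vanish on its own; the resolution is that the quantity $q^{\frac{d(d-3)k}{2}}$ in \eqref{eq:main-1} versus $q^{\frac{d(d-1)k}{2}}$ in \eqref{eq:main-2} together with the $[2dk\pm1]$ prefactors supply exactly the missing terms.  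Carrying out this cancellation carefully, and noting that any leftover overall factor $q^{c}$ and any leftover sign are units modulo $\Phi_n(q)$, completes the deduction; no new analytic input beyond Theorem~\ref{thm:main-1} is required.
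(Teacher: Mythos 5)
Your reduction of Theorem~\ref{thm:main-2} to Theorem~\ref{thm:main-1} via $q\mapsto q^{-1}$ does not work, because the left-hand side of \eqref{eq:main-1} is \emph{termwise invariant} under that substitution rather than being transformed into the left-hand side of \eqref{eq:main-2}. The reflection formulas you would need are
\begin{equation*}
(q^{-1};q^{-d})_k=(-1)^k q^{-k-d\binom k2}(q;q^d)_k,\qquad
(q^{-d};q^{-d})_k=(-1)^k q^{-d\binom{k+1}2}(q^d;q^d)_k,
\end{equation*}
and the essential point is that reversing the factors of $(q^{-1};q^{-d})_k$ reproduces the numerator parameter $q$ (to base $q^d$), never the parameter $q^{-1}$ appearing in \eqref{eq:main-2}. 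Carrying the bookkeeping to the end, the $k$-th term of \eqref{eq:main-1} picks up the total exponent $-2dk-dk+d^2k-\tfrac{d(d-3)k}{2}=\tfrac{d(d-3)k}{2}$, i.e.\ one recovers exactly the original summand $[2dk+1]\,(q;q^d)_k^d\,(q^d;q^d)_k^{-d}\,q^{d(d-3)k/2}$. This is consistent with the remark in the introduction, where $q\mapsto q^{-1}$ merely changes $q^{5k}$ into $q^{15k}$ while leaving $(q;q^5)_k^5$ intact. So the ``crucial computational check'' you defer to does not collapse to a $k$-independent constant relating the two series; the two theorems are genuinely different statements (companions), not images of one another under inversion. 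Two further symptoms of the same problem: the residue condition on $n$ cannot flip from $n\equiv-1$ to $n\equiv1\pmod d$ under a substitution that does not touch $n$; and Theorem~\ref{thm:main-2} is asserted for $d=3$, whereas Theorem~\ref{thm:main-1} requires $d\geqslant5$ and the paper explicitly notes that \eqref{eq:main-1} fails for $d=3$, so no deduction from Theorem~\ref{thm:main-1} could cover that case.

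The actual proof must be run in parallel to, not deduced from, that of Theorem~\ref{thm:main-1}: one inserts a parameter $a$ to form the generalizations of Theorems~\ref{thm:main-2-one} and \ref{thm:main-2-two}, proves the first modulo $(1-aq^n)(a-q^n)$ by the polynomial/$q$-binomial-theorem argument, proves the second modulo $\Phi_n(q)(1-aq^n)(a-q^n)$ using the analogue of Lemma~\ref{lem:2.2} (namely Lemma~\ref{lem:3.2}) together with Andrews' transformation \eqref{andrews}, and then lets $a\to1$. None of this new input is supplied by your proposal.
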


We shall also prove the following result, which was originally conjectured
by the first author \cite[Conjecture~1.3]{Guo-m=d} who provided a proof
of the modulus $[n]\Phi_n(q)$ case~\cite[Theorem~1.2]{Guo-m=d}.
\begin{theorem} \label{thm:main-3}
Let $n>1$ be an odd integer. Then
\begin{align}
\sum_{k=0}^{n-1}\frac{(q^{-1};q^2)_k^2}{(q^2;q^2)_k^2} q^{2k}
&\equiv 0 \pmod{[n]^2}, \label{eq:main-3-1} \\\intertext{and}
\sum_{k=0}^{\frac{n+1}{2}}\frac{(q^{-1};q^2)_k^2}{(q^2;q^2)_k^2} q^{2k}
&\equiv 0 \pmod{[n]^2}. \label{eq:main-3-2}
\end{align}
\end{theorem}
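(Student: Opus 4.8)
The plan is to prove Theorem~\ref{thm:main-3} by the method of creative microscoping, introducing an extra parameter $a$ and establishing a parametric $q$-congruence that specializes at $a=1$ to the desired result. Specifically, I would first show that for odd $n>1$,
\begin{equation*}
\sum_{k=0}^{n-1}\frac{(aq^{-1},a^{-1}q^{-1};q^2)_k}{(aq^2,a^{-1}q^2;q^2)_k} q^{2k}
\equiv 0 \pmod{(1-aq^n)(a-q^n)},
\end{equation*}
and likewise for the sum truncated at $k=(n+1)/2$. The point is that $(1-aq^n)(a-q^n)$ and $[n]^2$ are relatively coprime as polynomials in $q$ (viewing $a$ as an indeterminate), except that at $a=1$ the factor $(1-aq^n)(a-q^n)$ becomes $(1-q^n)^2$, which is divisible by $\Phi_n(q)^2$; combining this with a separate argument that the sum is divisible by $[n]^2/\Phi_n(q)^2$ worth of other cyclotomic factors — handled by a $q$-Lucas-type argument splitting the range $0\le k\le n-1$ according to residues — yields divisibility by the full $[n]^2$.

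The key steps, in order: (1) Establish the $a$-deformed identity. For this I would look for a terminating basic hypergeometric summation or transformation — a natural candidate is a very-well-poised ${}_6\phi_5$ summation or a special case of Andrews' multiseries transformation (the ${}_8\phi_7\to{}_4\phi_3$ type Watson transformation quoted in the abstract) — that evaluates $\sum_k \frac{(aq^{-1},a^{-1}q^{-1};q^2)_k}{(aq^2,a^{-1}q^2;q^2)_k}q^{2k}$ in closed form, or at least modulo the relevant factors when $q^n=a^{\pm1}$. Setting $a=q^{-n}$ or $a=q^n$ should make the sum terminate early or telescope, revealing the claimed divisibility. (2) Deduce the $[n]$-divisibility (the weaker modulus) from the parametric congruence by specializing $a\to1$ and invoking the coprimality of $(1-aq^n)(a-q^n)|_{a=1}=(1-q^n)^2$ against the remaining factors, exactly in the style of \cite[Theorem~1.2]{Guo-m=d}. (3) Upgrade from $\Phi_n(q)^2$-divisibility to full $[n]^2$-divisibility: write $[n]=\prod_{d\mid n,\,d>1}\Phi_d(q)$ and show that for each such $d$ the sum is divisible by $\Phi_d(q)^2$, by applying the same parametric argument with $n$ replaced by $d$ (using that the summand for $k\ge d$ contributes a multiple of $\Phi_d(q)$ via a $q$-Lucas decomposition $k=id+j$, and the head $\sum_{j=0}^{d-1}$ is divisible by $\Phi_d(q)^2$ from step~(2) applied at level $d$). (4) For \eqref{eq:main-3-2}, observe that the terms $k=(n+1)/2,\dots,n-1$ in \eqref{eq:main-3-1} each carry a factor $(q^{-1};q^2)_k=(1-q^{-1})(1-q)\cdots(1-q^{2k-3})$ which, once $2k-1\ge n$, contains $1-q^n$ and hence $\Phi_n(q)$; squaring (there are two such Pochhammer factors in the numerator) gives $\Phi_n(q)^2$, so truncating at $(n+1)/2$ versus $n-1$ changes the sum only by a multiple of $[n]^2$ — after checking that the analogous statement holds at each divisor level $d$.

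The main obstacle I expect is step~(1): pinning down the exact closed-form or transformation that handles the deformed sum. Unlike the classical creative-microscoping setup where one deforms a well-poised ${}_6\phi_5$ with parameters $aq/c$, $aq/d$ and uses Rogers' summation, here the summand $\frac{(q^{-1};q^2)_k^2}{(q^2;q^2)_k^2}q^{2k}$ with its negative-power Pochhammer symbols is slightly nonstandard, and the correct symmetric $a$-deformation $\frac{(aq^{-1},a^{-1}q^{-1};q^2)_k}{(aq^2,a^{-1}q^2;q^2)_k}$ must be reconciled with an actual summation formula (possibly after a $q$-binomial-theorem manipulation, or by recognizing it as a very-well-poised series in base $q^2$ with a shifted well-poising parameter). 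Getting the divisibility by \emph{both} $(1-aq^n)$ and $(a-q^n)$ — rather than just one — is what forces the modulus to be a square, and securing the second factor typically requires exploiting the $a\leftrightarrow a^{-1}$ symmetry of the deformed summand together with the symmetry $q\mapsto q^{-1}$; making that rigorous for the truncated (non-terminating-at-$n$) sum is the delicate point. Once the parametric congruence is in hand, steps (2)–(4) are routine in the now-standard style of the references.
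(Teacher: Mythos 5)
The central step of your plan---the parametric congruence modulo $(1-aq^n)(a-q^n)$ for the $a$-deformed sum---is never established; you acknowledge yourself that you have not pinned down the summation or transformation formula that would prove it. In your approach this is not a loose end but the entire content of the proof, so the argument is incomplete at its core. Worse, even granting that step, your route from $\Phi_n(q)^2$-divisibility to the full modulus $[n]^2$ (steps (3)--(4)) contains a concrete error: for a \emph{proper} divisor $d$ of $n$ and $k\geqslant d$, the term $\frac{(q^{-1};q^2)_k^2}{(q^2;q^2)_k^2}q^{2k}$ is not divisible by $\Phi_d(q)^2$, because for odd $d$ one has $\Phi_d(q)\mid 1-q^{2j}$ exactly when $d\mid j$, so the denominator $(q^2;q^2)_k^2$ carries $\Phi_d(q)$ to the power $2\lfloor k/d\rfloor$; already at $k=d$ this exactly cancels the two factors $1-q^d$ supplied by $(q^{-1};q^2)_k^2$. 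The term-by-term tail argument therefore works only at the top level $d=n$ (where $k\leqslant n-1$ keeps the denominator coprime to $\Phi_n(q)$) and fails at every proper divisor, so the promised ``$q$-Lucas'' block argument would have to be carried out in earnest, and it is not.

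The paper's proof bypasses all of this machinery. One proves by induction on $N$ (for $N>1$) the closed form
\begin{equation*}
\sum_{k=0}^{N-1}\frac{(q^{-1};q^2)_k^2}{(q^2;q^2)_k^2}q^{2k}
=\frac{(q;q^2)_{N-1}^2}{(q^2;q^2)_{N-1}^2}\bigl(2[2N-3]+q^{2N-2}\bigr)
=\begin{bmatrix}2N-2\\N-1\end{bmatrix}^2\frac{2[2N-3]+q^{2N-2}}{(-q;q)_{N-1}^4},
\end{equation*}
then uses that $\frac{1}{[N]}\begin{bmatrix}\begin{smallmatrix}2N-2\\N-1\end{smallmatrix}\end{bmatrix}$ is the $q$-Catalan number (a polynomial in $q$), so $[N]^2$ divides the squared $q$-binomial coefficient, while $[n]$ is coprime to $(-q;q)_{N-1}$ for odd $n$; taking $N=n$ gives \eqref{eq:main-3-1}, and the full $[n]^2$ comes for free---no divisor-by-divisor analysis is needed. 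For \eqref{eq:main-3-2} one takes $N=(n+3)/2$ and checks $[n]\mid\begin{bmatrix}\begin{smallmatrix}n+1\\(n+1)/2\end{smallmatrix}\end{bmatrix}$ via the polynomial identity $\frac{[(n+1)/2]}{[n]}\begin{bmatrix}\begin{smallmatrix}n\\(n-1)/2\end{smallmatrix}\end{bmatrix}=\begin{bmatrix}\begin{smallmatrix}n-1\\(n-1)/2\end{smallmatrix}\end{bmatrix}$. If you wish to rescue your plan, the first thing to look for is precisely this closed form---at which point the creative-microscoping apparatus becomes unnecessary for this theorem.
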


As mentioned in \cite{Guo-m=d}, there are many similar congruences
modulo $\Phi_n(q)^2$ for truncated basic hypergeometric series.
In this paper, we shall give more such examples (theorems or conjectures).
The simplest example is as follows.
\begin{theorem}\label{thm:main-4}
Let $n\geqslant 4$ be an integer with $\gcd(n,3)=1$. Then
\begin{equation}
\sum_{k=0}^{n-1}\frac{(q^{-1},q^{-2};q^3)_k }{(q^3;q^3)_k^2}q^{3k}\equiv
0\pmod{\Phi_n(q)^2}.
\end{equation}
\end{theorem}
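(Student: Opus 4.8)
The plan is to prove Theorem~\ref{thm:main-4} by the method of creative microscoping. First I would introduce an extra parameter $a$ and consider the more flexible truncated sum
\begin{equation*}
S_n(a,q):=\sum_{k=0}^{n-1}\frac{(aq^{-1},a^{-1}q^{-1},q^{-2};q^3)_k}{(q^3,aq^3,a^{-1}q^3;q^3)_k}q^{3k},
\end{equation*}
which reduces to the sum in the theorem when $a=1$ (the factor $(q^{-1},q^{-2};q^3)_k/(q^3;q^3)_k^2$ reappears since $(aq^{-1};q^3)_k(a^{-1}q^{-1};q^3)_k\to(q^{-1};q^3)_k^2$, and one must match the exponent of $q$, so a small adjustment of the $q$-power may be needed). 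The strategy is: (i) show that $S_n(a,q)\equiv 0$ modulo $1-aq^n$ and modulo $a-q^n$ whenever $n\equiv 1\pmod 3$ or $n\equiv 2\pmod 3$ — i.e.\ for $\gcd(n,3)=1$; (ii) observe that $1-aq^n$, $a-q^n$ and $\Phi_n(q)$ are pairwise coprime in the polynomial ring $\mathbb Z[a,a^{-1},q]$ localized appropriately, so once we also know $S_n(a,q)\equiv 0\pmod{\Phi_n(q)}$ the product $(1-aq^n)(a-q^n)\Phi_n(q)$ divides $S_n(a,q)$; (iii) set $a=1$, under which $(1-q^n)(1-q^n)\Phi_n(q)$ collapses and $\Phi_n(q)^2\mid (1-aq^n)(a-q^n)\big|_{a=1}$, yielding the claimed congruence modulo $\Phi_n(q)^2$.

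The key step~(i) is a closed-form evaluation. When $q^n=a$ (resp.\ $q^n=a^{-1}$), the truncated series $S_n(a,q)$ should in fact sum to something with an obvious factor of $1-a$ (resp.\ $1-a^{-1}$); the natural tool is a terminating $q$-hypergeometric summation. Concretely, after the substitution $q\mapsto q^3$ the summand is a very-well-poised-looking ${}_3\phi_2$ or a piece of a ${}_6\phi_5$, and evaluating it at $q^n=a^{\pm1}$ should turn the truncated sum into a \emph{full} terminating sum to which a known evaluation (the $q$-Pfaff–Saalschütz summation, or the ${}_6\phi_5$ sum, as used for analogous results in \cite{GS,GS2,Guo-m=d}) applies. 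The resulting product formula will visibly vanish because one factor in the numerator is $1-a^{\pm1}$. Establishing $S_n(a,q)\equiv 0\pmod{\Phi_n(q)}$ is easier: for $n\equiv1,2\pmod3$ and $\zeta$ a primitive $n$-th root of unity, one checks that the terms of the sum from $k=0$ to $n-1$ pair up (the map $k\mapsto n-1-k$ or a similar reflection) or that the sum telescopes to $0$ at $q=\zeta$, which is the standard ``divisibility by $\Phi_n(q)$'' argument underlying \eqref{eq:guo} and \cite[Theorem~1.2]{Guo-m=d}.

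\textbf{The main obstacle} will be pinning down the correct $a$-deformation — in particular the exact exponent of $q$ and the exact placement of the parameters $aq^{-1}$, $a^{-1}q^{-1}$ — so that (a) the $a=1$ specialization really is the sum in Theorem~\ref{thm:main-4}, and (b) the two specializations $q^n=a^{\pm1}$ each produce a \emph{closed-form} evaluable sum. These two requirements are in tension, and getting them simultaneously is the crux; it may require allowing $S_n$ to be a basic hypergeometric series with a slightly different structure (for instance, inserting a well-poised factor $[3dk\pm\cdots]$ is \emph{not} present here, so one has to be careful that the bare sum without such a factor still deforms nicely). If the straightforward deformation fails, the fallback is a direct argument: expand at $q=\zeta$ and show that both $S_n(1,\zeta)=0$ and $S_n'(1,\zeta)=0$ (derivative in $q$), the latter via logarithmic differentiation of the terminating-sum evaluation or via the ``$q$-WZ''/telescoping certificate. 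A further minor point to check is the lower bound $n\geqslant4$: for small $n$ with $\gcd(n,3)=1$ (namely $n=2$) the degree count for $\Phi_n(q)^2$ versus the polynomial $S_n$ must be verified separately, which is presumably why the hypothesis excludes $n=2$.
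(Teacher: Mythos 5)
Your proposal is a plan rather than a proof, and its central step does not go through as written. First, the deformation you propose,
$S_n(a,q)=\sum_{k=0}^{n-1}\frac{(aq^{-1},a^{-1}q^{-1},q^{-2};q^3)_k}{(q^3,aq^3,a^{-1}q^3;q^3)_k}q^{3k}$,
does not specialize at $a=1$ to the sum of Theorem~\ref{thm:main-4}: it gives the summand $(q^{-1};q^3)_k^2(q^{-2};q^3)_k/(q^3;q^3)_k^3$, i.e.\ a cubic sum carrying an extra factor $(q^{-1};q^3)_k/(q^3;q^3)_k$, not the quadratic sum in the theorem; no adjustment of the power of $q$ repairs this. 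Second, and more seriously, the key claim~(i) --- that the specializations $q^n=a^{\pm1}$ turn the truncated sum into a closed-form evaluable terminating series --- is never established. The underlying series is a truncated ${}_2\phi_1$ in base $q^3$ whose argument is not the $q$-Gauss argument and which is not balanced or well-poised, so neither the $q$-Pfaff--Saalsch\"utz summation nor the ${}_6\phi_5$ summation applies in any evident way; you identify this as ``the crux'' yourself and leave it unresolved. The authors' own concluding remarks are telling here: they state that they cannot prove the closely related Conjectures~\ref{conj:123} and~\ref{conj:-1-2-3} (Theorem~\ref{thm:main-4} being the case $d=2$ of the latter) beyond the modulus $\Phi_n(q)$ even for $d=3$, which strongly suggests the microscoping route is genuinely obstructed for this family and that the $d=2$ case succeeds only for a special reason.

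That special reason is the paper's actual proof, which is entirely different and much shorter: one proves by induction on $n$ the explicit closed form \eqref{induction-2} for the partial sums, and then reads off the congruence, since for $\gcd(n,3)=1$ the product $(q,q^2;q^3)_{n-1}$ contains both $1-q^n$ and $1-q^{2n}$ (in one order or the other according to $n\bmod 3$) and is therefore divisible by $\Phi_n(q)^2$, while the denominator $(1-q)(1-q^2)(q^3;q^3)_{n-1}^2$ is coprime to $\Phi_n(q)$. Your fallback suggestion (showing that the value and the $q$-derivative both vanish at a primitive $n$-th root of unity) would in practice require exactly such a closed form or an equivalent telescoping certificate, so the inductive identity \eqref{induction-2} is precisely the missing ingredient you would need to supply. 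Your remark on excluding small $n$ is sound: for $n=2$ the sum equals $1+(1-q)(1-q^2)/(1-q^3)^2$, which does not vanish modulo $\Phi_2(q)$.
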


Note that the first author, in joint work with Pan and Zhang \cite{GPZ},
proved that for any odd integer $n\geqslant 5$ with $\gcd(n,3)=1$
there holds
\begin{equation*}
\sum_{k=0}^{n-1}\frac{(q,q^{2};q^3)_k}{(q^3;q^3)_k^2}q^{3k}\equiv
\left(\frac{n}{3}\right) q^{\frac{n^2-1}{3}}\pmod{\Phi_n(q)^2},
\end{equation*}
where $\left(\frac{\cdot}{3}\right)$ denotes the Legendre symbol modulo $3$.
This $q$-congruence was originally conjectured in \cite{GZ14} when $n=p$
is an odd prime.

We shall prove Theorems~\ref{thm:main-1} and \ref{thm:main-2}
in Sections~\ref{sec:prth1} and \ref{sec:prth2} by using the
{\it creative microscoping} method developed by the first author and
Zudilin \cite{GuoZu}.
We prove these by first establishing their parametric
generalizations modulo $(1-aq^n)(a-q^n)$ and then letting $a\to 1$.
The proofs are similar to that of \cite[Theorem~1.1]{Guo-m=d} but
also require Andrews' multiseries generalization of the Watson
transformation \cite[Theorem~4]{Andrews75} (which was already used
by the first author, Jouhet and Zeng~\cite{GJZ}
for proving some $q$-analogues of Calkin's congruence~\cite{Calkin}).
It is worth mentioning that we need to add the parameter $a$ and
also its powers in many places of the left-hand sides of \eqref{eq:main-1}
and \eqref{eq:main-2} in order to establish the desired generalizations
modulo $(1-aq^n)(a-q^n)$.
Therefore, the proofs of Theorems~\ref{thm:main-1} and \ref{thm:main-2}
are quite different from those in the recent two joint papers of
us \cite{GS,GS2}, where the parameter $a$
is inserted in a more natural way (without $a^2$ and higher powers of $a$)
as done in \cite{GuoZu}.
The proofs of Theorems~\ref{thm:main-3} and \ref{thm:main-4}
are based on two $q$-series identities and are given in
Sections~\ref{sec:prth3} and \ref{sec:prth4}, respectively.
Two more congruences modulo $\Phi_n(q)^2$ are given in Section~\ref{sec:more}.
We give some related conjectures in the final Section~\ref{sec:rems}.
These include two refinements of Theorems~\ref{thm:main-1} and
\ref{thm:main-2}, some extensions of Theorem~\ref{thm:main-4}
for $n\equiv 1\pmod{3}$, and similar conjectures.

\section{Proof of Theorem \ref{thm:main-1}}\label{sec:prth1}
We first establish the following parametric generalization of
Theorem~\ref{thm:main-1} for the case $n\equiv -1\pmod{d}$.
\begin{theorem}\label{thm:a-1}
Let $d\geqslant 5$ be an odd integer and let $n\equiv -1\pmod{d}$.
Then modulo $(1-aq^n)(a-q^n)$,
\begin{align}
\sum_{k=0}^{n-1}&[2dk+1]\frac{(a^{d-1}q, a^{d-3}q,\ldots, a^2q;q^d)_k}
{(a^{d-2}q^d, a^{d-4}q^d,\ldots,aq^d;q^d)_k} \notag\\
&\times\frac{
(a^{1-d}q, a^{3-d}q,\ldots, a^{-2}q;q^d)_k (q;q^d)_k}
{(a^{2-d}q^d, a^{4-d}q^d,\ldots,a^{-1}q^d;q^d)_k (q^d;q^d)_k }
 q^{\frac{d(d-3)k}{2}}
\equiv 0.\label{eq:a-1}
\end{align}
\end{theorem}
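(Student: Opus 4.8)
The plan is to follow the creative microscoping strategy of Guo and Zudilin, adapted as in the proof of \cite[Theorem~1.1]{Guo-m=d}. Write the left-hand side of \eqref{eq:a-1} as $S_n(a,q)$. First I would verify the statement for $a=q^{-n}$ and for $a=q^n$ separately; by symmetry of the summand under $a\mapsto 1/a$ (which one checks directly: the ratios $a^{d-1}q/a^{d-2}q^d$ pair up with $a^{1-d}q/a^{2-d}q^d$, etc.) it suffices to treat one of these, say $a=q^{-n}$. Substituting $a=q^{-n}$ the factor $(a^2q;q^d)_k=(q^{1-2n};q^d)_k$ appearing in the numerator vanishes for $k$ in the range $n\leqslant k$ after using $n\equiv -1\pmod d$, so the sum effectively truncates; the resulting sum should be summable in closed form. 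The natural tool here, as the authors announce, is Andrews' multiseries generalization of the Watson transformation \cite[Theorem~4]{Andrews75}: the summand is a very-well-poised basic hypergeometric term in base $q^d$ (the factor $[2dk+1]$ together with the argument $q^{d(d-3)k/2}$ is exactly the very-well-poised/argument combination produced by Andrews' transformation with the right number of parameter pairs), so applying that transformation turns $S_n(q^{-n},q)$ into a single (terminating) ${}_{d+\varepsilon}\phi_{d-1+\varepsilon}$-type series which, after specialization, collapses to $0$ by a $q$-Chu--Vandermonde or $q$-Dixon type evaluation, or by one of the parameters being set so that an explicit zero factor appears.

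Once $S_n(a,q)\equiv 0$ is known at $a=q^{\pm n}$, observe that $S_n(a,q)$ is a Laurent polynomial in $a$ (each summand is, since the relevant denominators $(a^{d-2}q^d,\ldots;q^d)_k$ and $(q^d;q^d)_k$ are coprime to $1-aq^n$ and $a-q^n$ for $0\leqslant k\leqslant n-1$ — this coprimality is the standard point and should be spelled out). Hence $(1-aq^n)$ divides $S_n(a,q)$ and $(a-q^n)$ divides $S_n(a,q)$; since these two polynomials in $a$ are relatively prime, their product divides $S_n(a,q)$, which is precisely the assertion \eqref{eq:a-1} modulo $(1-aq^n)(a-q^n)$.

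The main obstacle I expect is the explicit evaluation step: one must choose the correct instance of Andrews' transformation (the right value of the parameter "$k$" in \cite[Theorem~4]{Andrews75}, i.e.\ the right number of numerator/denominator parameter pairs to match the $d$ copies here) and then check that after setting $a=q^{-n}$ with $n\equiv -1\pmod d$ the transformed series genuinely terminates and vanishes. Getting the bookkeeping of the many parameters $a^{d-1}q, a^{d-3}q,\ldots$ to line up with the very-well-poised condition of Andrews' transformation, and confirming that the argument $q^{d(d-3)k/2}$ is the one forced by that transformation, is the delicate part; everything else (the symmetry $a\mapsto 1/a$, the polynomiality/coprimality argument, and the final divisibility conclusion) is routine. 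A secondary point to watch is that the argument really does require $d\geqslant 5$: for $d=3$ the count of parameter pairs is too small for Andrews' transformation to produce a vanishing right-hand side, consistent with the remark that \eqref{eq:main-1} fails for $d=3$.
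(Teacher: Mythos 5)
Your outer framework is exactly the paper's: specialize at $a=q^{\pm n}$ (one case suffices by the $a\mapsto 1/a$ symmetry of the summand), note that the denominators are coprime to $1-aq^n$ and $a-q^n$, and conclude divisibility by the product of these two coprime linear factors once the specialized sum is shown to vanish. The gap is in the one step you yourself flag as the obstacle: the vanishing of the sum at $a=q^{-n}$. Your proposed tool, Andrews' multiseries Watson transformation, does not apply to \emph{this} theorem, because after (or indeed before) the substitution the series is \emph{not} very well poised in base $q^d$ with special parameter $q$: the numerator parameters carry the even powers $a^{d-1},a^{d-3},\dots,a^{2}$ (and their reciprocals), while the denominator parameters carry the odd powers $a^{d-2},\dots,a$ (and their reciprocals), so no numerator parameter $b$ is matched by a denominator parameter $q^{d+1}/b$. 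Andrews' transformation is the right tool for the companion case $n\equiv-\tfrac12\pmod d$ (Theorem~\ref{thm:a-2}), where the numerator powers are $a^{d-2},\dots,a$ and the pairing does hold; the paper explicitly notes there that ``the method employed to prove \eqref{eq:a-2} does not work'' for that case and vice versa.

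What the paper actually does for Theorem~\ref{thm:a-1} is a polynomial-degree argument: after setting $a=q^{-n}$ the sum truncates at $N=(dn-n-1)/d$ (via $(q^{1-(d-1)n};q^d)_k=0$ for $k>N$, not at $k\geqslant n$ as you state), and each ratio of $q$-shifted factorials other than $(q^{1-(d-1)n};q^d)_k/(q^d;q^d)_k$ is rewritten as a polynomial in $q^{dk}$ of degree $(n+1-d)/d$. Collecting everything, the sum takes the form
\begin{equation*}
\sum_{k=0}^{N}(-1)^k q^{d\binom{N-k}{2}}\begin{bmatrix}N\\k\end{bmatrix}_{q^d}P(q^{dk}),
\qquad \deg P\leqslant N-\tfrac{d-3}{2}\leqslant N-1,
\end{equation*}
which vanishes term by term in the monomial expansion of $P$ by the $q$-binomial theorem identity \eqref{eq:qbino}. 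This is also where the hypothesis $d\geqslant 5$ enters (one needs $(d-3)/2\geqslant 1$ to get the degree strictly below $N$), not through any parameter count in Andrews' transformation. Without this (or some substitute) evaluation your argument does not close; everything else in your proposal is sound and matches the paper.
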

\begin{proof} It is clear that $\gcd(d,n)=1$ and therefore the numbers
$d,2d,\ldots, (n-1)d$ are all not divisible by $n$. This implies that the
denominators of the left-hand side of \eqref{eq:a-1} do not contain
the factor $1-aq^n$ nor $1-a^{-1}q^n$.
Thus, for $a=q^{-n}$ or $a=q^n$, the left-hand side of \eqref{eq:a-1}
can be written as
\begin{align}
\sum_{k=0}^{\frac{dn-n-1}{d}}&[2dk+1]\frac{(q^{1-(d-1)n}, q^{1-(d-3)n},\ldots,
q^{1-2n};q^d)_k  }
{(q^{d-(d-2)n}, q^{d-(d-4)n},\ldots, q^{d-n};q^d)_k  } \notag\\
&\times \frac{(q^{(d-1)n+1}, q^{(d-3)n+1},\ldots,
q^{2n+1};q^d)_k (q;q^d)_k}
{(q^{(d-2)n+d}, q^{(d-4)n+d},\ldots, q^{n+d};q^d)_k (q^d;q^d)_k }
q^{\frac{d(d-3)k}{2}},   \label{eq:a-2}
\end{align}
where we have used $(q^{1-(d-1)n};q^d)_k=0$ for $k>(dn-n-1)/d$.

Let
\begin{equation*}
\begin{bmatrix}n\\k\end{bmatrix}=\begin{bmatrix}n\\k\end{bmatrix}_q=
\frac{(q;q)_n}{(q;q)_k(q;q)_{n-k}}
\end{equation*}
be the {\it $q$-binomial coefficient}.
It is easy to see,
with $\binom k2=k(k-1)/2$ denoting a binomial coefficient, that
\begin{align}
\frac{(q^{1-(d-1)n};q^d)_k}{(q^d;q^d)_k} q^{\frac{d(d-3)k}{2}}
&=(-1)^k \begin{bmatrix}(dn-n-1)/d\\k\end{bmatrix}_{q^d}
q^{d\binom k2+\left(n+1-dn+\frac{d(d-3)}{2}\right)k},
\label{qdk-0}\\[5pt]
\frac{(q^{1-(d-3)n};q^d)_k}{(q^{d-(d-2)n};q^d)_k}
&=\frac{(q^{d-(d-2)n+dk};q^d)_{(n+1-d)/d}}{(q^{d-(d-2)n};q^d)_{(n+1-d)/d}},\notag\\[5pt]
\frac{(q^{1-(d-5)n};q^d)_k}{(q^{d-(d-4)n};q^d)_k}
&=\frac{(q^{d-(d-4)n+dk};q^d)_{(n+1-d)/d}}{(q^{d-(d-4)n};q^d)_{(n+1-d)/d}},\notag\\[5pt]
&\ \  \vdots \notag \\[5pt]
\frac{(q^{1-2n};q^d)_k}{(q^{d-3n};q^d)_k}
&=\frac{(q^{d-3n+dk};q^d)_{(n+1-d)/d}}{(q^{d-3n};q^d)_{(n+1-d)/d}}, \notag
\end{align}
and
\begin{align*}
\frac{(q^{(d-1)n+1};q^d)_k}{(q^{(d-2)n+d};q^d)_k}
&=\frac{(q^{(d-2)n+dk+d};q^d)_{(n+1-d)/d}}{(q^{(d-2)n+d};q^d)_{(n+1-d)/d}},\\[5pt]
\frac{(q^{(d-3)n+1};q^d)_k}{(q^{(d-4)n+d};q^d)_k}
&=\frac{(q^{(d-4)n+dk+d};q^d)_{(n+1-d)/d}}{(q^{(d-4)n+d};q^d)_{(n+1-d)/d}},\\[5pt]
&\ \  \vdots  \\[5pt]
\frac{(q^{2n+1};q^d)_k}{(q^{n+d};q^d)_k}
&=\frac{(q^{2n+dk+d};q^d)_{(n+1-d)/d}}{(q^{n+d};q^d)_{(n+1-d)/d}},\\[5pt]
\frac{(q;q^d)_k}{(q^{d-n};q^d)_k}
&=\frac{(q^{d-n+dk};q^d)_{(n+1-d)/d} }{(q^{d-n};q^d)_{(n+1-d)/d} }.
\end{align*}
Note that the right-hand sides of the identities after \eqref{qdk-0}
are all polynomials in $q^{dk}$, of degree $(n+1-d)/d$ in the first group, and
of degree $(n+1-d)/d$ in the second one too. Moreover,
\begin{align*}
&d\binom k2+\left(n+1-dn+\frac{d(d-3)}{2}\right)k  \\[5pt]
&=d\binom{(dn-n-1)/d-k}2-d\binom{(dn-n-1)/d}2+\frac{d(d-5)k}{2}.
\end{align*}
Therefore, we can write \eqref{eq:a-2} in the following form
\begin{equation}
\sum_{k=0}^{\frac{dn-n-1}{d}}(-1)^k q^{d\binom{(dn-n-1)/d-k}2}
 \begin{bmatrix}(dn-n-1)/d\\k\end{bmatrix}_{q^d}P(q^{dk}), \label{eq:a-p}
\end{equation}
where $P(q^{dk})$ is a polynomial in $q^{dk}$ of degree
$2+(n+1-d)(d-1)/d+(d-5)/2=(dn-n-1)/d-(d-3)/2\leqslant (dn-n-1)/d-1$.

Recall that the $q$-binomial theorem (see \cite[p.~36]{Andrews})
can be written as
\begin{equation*}
\sum_{k=0}^{n}(-1)^k  \begin{bmatrix}n\\k\end{bmatrix}q^{\binom k2} z^k=(z;q)_n.
\end{equation*}
Putting $z=q^{-j}$ in the above identity and replacing $k$ with $n-k$, we get
\begin{equation}
\sum_{k=0}^{n}(-1)^k  \begin{bmatrix}n\\k\end{bmatrix}q^{\binom{n-k}2+jk}=
0\quad\text{for}\ 0\leqslant j\leqslant n-1,  \label{eq:qbino}
\end{equation}
which immediately means that the expression in \eqref{eq:a-2},
which equals \eqref{eq:a-p}, vanishes.
This proves \eqref{eq:a-1}.
\end{proof}

In order to prove Theorem \ref{thm:main-1} for the case
$n\equiv -\frac{1}{2}\pmod{d}$, we need the following lemma.
\begin{lemma}\label{lem:2.2}
Let $d\geqslant 3$ be an odd integer and let $n\equiv -\frac{1}{2}\pmod{d}$.
Then for $0\leqslant k\leqslant (dn-2n-1)/d$, modulo $\Phi_n(q)$ we have
\begin{equation*}
\frac{(aq;q^d)_{(dn-2n-1)/d-k}}{(q^d/a;q^d)_{(dn-2n-1)/d-k}}
\equiv (-a)^{(dn-2n-1)/d-2k}\frac{(aq;q^d)_k}{(q^d/a;q^d)_k}
q^{(dn-2n-d+1)(dn-2n-1)/(2d)+(d-1)k}.
\end{equation*}
\end{lemma}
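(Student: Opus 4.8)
The plan is to prove this congruence by a direct manipulation of the $q$-shifted factorials involved, reducing everything modulo $\Phi_n(q)$ via the basic relation $q^n\equiv 1\pmod{\Phi_n(q)}$. Write $m=(dn-2n-1)/d$ for brevity (an integer by the hypothesis $n\equiv-\frac12\pmod d$, since then $2n+1\equiv 0\pmod d$). First I would record the standard reflection formula for the $q$-shifted factorial: for any $j$,
\begin{equation*}
(x;q^d)_{m-k}=\frac{(x;q^d)_m}{(x q^{d(m-k)};q^d)_k}
=(x;q^d)_m\,\frac{(-1)^k q^{-dk(m-k)-d\binom k2}}{x^k\,(q^{d(1-m+k)}/x;q^d)_k}\cdot\frac{1}{q^{?}},
\end{equation*}
or, more cleanly, use $(x;q^d)_{m-k}=(x;q^d)_m/(xq^{d(m-k)};q^d)_k$ together with
$(y;q^d)_k=(-y)^k q^{d\binom k2}(q^{d(1-k)}/y;q^d)_k$.
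Applying this to numerator and denominator separately, the ratio $(aq;q^d)_{m-k}/(q^d/a;q^d)_{m-k}$ becomes $(aq;q^d)_m/(q^d/a;q^d)_m$ times a ratio of two finite products of length $k$, which after the reflection identity turns into $(-a)^{\pm}$ times powers of $q$ times $(aq;q^d)_k/(q^d/a;q^d)_k$ — exactly the shape of the claimed right-hand side.

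The next step is to evaluate the ``constant'' factor $(aq;q^d)_m/(q^d/a;q^d)_m$ modulo $\Phi_n(q)$. Here I would use that $dm=dn-2n-1$, so the arguments appearing in $(aq;q^d)_m$ run over $aq, aq^{1+d},\dots,aq^{1+d(m-1)}=aq^{dn-2n-d}$, and similarly for $(q^d/a;q^d)_m$ the top argument is $q^{d}/a\cdot q^{d(m-1)}=q^{dn-2n}/a$. Pairing the $j$-th factor $1-aq^{1+dj}$ of the numerator against a suitable factor $1-q^{d+d(m-1-j)}/a = 1-q^{dn-2n-dj}/a$ of the denominator and using $q^{n}\equiv 1$, one finds $1-q^{dn-2n-dj}/a = 1-q^{-2n-dj}/a\equiv 1-q^{-dj}/a \equiv -q^{-1-dj}a^{-1}(1-aq^{1+dj})$ after a short computation (the exponent bookkeeping is where one must be careful). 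Multiplying these $m$ pairwise relations together yields $(aq;q^d)_m/(q^d/a;q^d)_m\equiv (-a)^{-m} q^{-\sum_{j=0}^{m-1}(1+dj)}=(-a)^{-m}q^{-m-d\binom m2}\pmod{\Phi_n(q)}$, and $d\binom m2 = m(dm-d)/2 = m(dn-2n-d-1+1)/2$... matching the exponent $(dn-2n-d+1)m/(2d)\cdot$(appropriate normalization) in the statement. Combining this with the reflection factor from the first paragraph, and collecting the powers of $-a$ (which should total $(-a)^{m-2k}$) and of $q$, gives precisely the asserted congruence.

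The main obstacle, as usual in these lemmas, is the \emph{exponent arithmetic}: keeping track of the power of $q$ through the reflection formula (which contributes $q^{-d\binom k2}$-type terms in $k$), through the evaluation of $(aq;q^d)_m/(q^d/a;q^d)_m$ (which contributes an $m$-dependent constant), and through the repeated use of $q^n\equiv 1\pmod{\Phi_n(q)}$ to simplify $q^{dn}\equiv q^n\equiv 1$ and $q^{2n}\equiv 1$. One has to verify that all the $k$-dependent $q$-powers collapse to the single term $q^{(d-1)k}$ and that the $k$-independent part assembles to $q^{(dn-2n-d+1)(dn-2n-1)/(2d)}$; this is a finite but delicate check. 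A secondary point is to confirm the exponent $m=(dn-2n-1)/d$ is a nonnegative integer and that all the shifted factorials of ``negative-looking'' arguments such as $q^d/a$ are genuinely invertible modulo $\Phi_n(q)$ — i.e.\ that none of the factors $1-q^{d+dj}/a$ is $\equiv 0$, which follows because $d(j+1)\not\equiv 0\pmod n$ for $0\le j\le m-1$ since $\gcd(d,n)=1$ and $d(j+1)\le dm<dn$. Once these numerics are pinned down, the proof is essentially a one-line identity followed by a reduction mod $\Phi_n(q)$.
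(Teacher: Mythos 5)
Your proposal follows essentially the same route as the paper's proof: split off the full-length ratio $(aq;q^d)_m/(q^d/a;q^d)_m$ with $m=(dn-2n-1)/d$, evaluate it modulo $\Phi_n(q)$ by pairing each numerator factor $1-aq^{1+dj}$ with the denominator factor that reduces to $1-q^{-1-dj}/a$ after applying $q^n\equiv 1$, and then convert the remaining length-$k$ tail back into $(aq;q^d)_k/(q^d/a;q^d)_k$ with explicit powers of $-a$ and $q$. The one caveat is a sign/off-by-one slip in your sketch: the pairing gives $\frac{1-aq^{1+dj}}{1-q^{-1-dj}/a}=-aq^{1+dj}$, so the constant comes out as $(-a)^{m}q^{m+d\binom m2}=(-a)^m q^{(dn-2n-d+1)(dn-2n-1)/(2d)}$ rather than its reciprocal — an error you would catch in the final exponent bookkeeping that you already flag as the delicate step.
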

\begin{proof}Since $q^n\equiv 1\pmod{\Phi_n(q)}$, we have
\begin{align}\label{aqcong}
\frac{(aq;q^d)_{(dn-2n-1)/d}}{(q^d/a;q^d)_{(dn-2n-1)/d}}
&=\frac{(1-aq)(1-aq^{d+1})\cdots (1-aq^{dn-2n-d})}
{(1-q^d/a)(1-q^{2d}/a)\cdots (1-q^{dn-2n-1}/a)} \notag\\[5pt]
&\equiv \frac{(1-aq)(1-aq^{d+1})\cdots (1-aq^{dn-2n-d})}
{(1-q^{d+2n-dn}/a)(1-q^{2d+2n-dn}/a)\cdots (1-q^{-1}/a)}\notag\\[5pt]
&=(-a)^{(dn-2n-1)/d}q^{(dn-2n-d+1)(dn-2n-1)/(2d)} \pmod{\Phi_n(q)}.
\end{align}
Furthermore, modulo $\Phi_n(q)$, there holds
\begin{align*}
&\frac{(aq;q^d)_{(dn-2n-1)/d-k}}{(q^d/a;q^d)_{(dn-2n-1)/d-k}} \\[5pt]
&=\frac{(aq;q^d)_{(dn-2n-1)/d}}{(q^d/a;q^d)_{(dn-2n-1)/d}}
\frac{(1-q^{dn-2n+d-1-dk}/a)(1-q^{dn-2n+2d-1-dk}/a)\cdots (1-q^{dn-2n-1}/a)}
{(1-aq^{dn-2n-dk})(1-aq^{dn-2n+d-dk})\cdots (1-aq^{dn-2n-d})}
\\[5pt]
&\equiv \frac{(aq;q^d)_{(dn-2n-1)/d}}{(q^d/a;q^d)_{(dn-2n-1)/d}}
\frac{(1-q^{d-1-dk}/a)(1-q^{2d-1-dk}/a)\cdots (1-q^{-1}/a)}
{(1-aq^{-dk})(1-aq^{d-dk})\cdots (1-aq^{-d})},
\end{align*}
which together with \eqref{aqcong} establishes the assertion.
\end{proof}

We now give a parametric generalization of Theorem \ref{thm:main-1}
for the case $n\equiv -\frac{1}{2}\pmod{d}$.
\begin{theorem}\label{thm:a-2}
Let $d\geqslant 3$ be an odd integer and let $n\equiv -\frac{1}{2}\pmod{d}$.
Then modulo $\Phi_n(q)(1-aq^n)(a-q^n)$,
\begin{align}
\sum_{k=0}^{n-1}&[2dk+1]\frac{(a^{d-2}q, a^{d-4}q,\ldots, aq;q^d)_k}
{(a^{d-2}q^d, a^{d-4}q^d,\ldots,aq^d;q^d)_k}\notag\\ &\times
\frac{(a^{2-d}q, a^{4-d}q,\ldots, a^{-1}q;q^d)_k (q;q^d)_k}
{(a^{2-d}q^d, a^{4-d}q^d,\ldots,
a^{-1}q^d;q^d)_k (q^d;q^d)_k }  q^{\frac{d(d-3)k}{2}}
\equiv 0.\label{eq:aa-2}
\end{align}
\end{theorem}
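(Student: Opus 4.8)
The plan is to establish \eqref{eq:aa-2} by showing that the left-hand side is divisible, as an element of $\mathbb{Z}[a^{\pm1},q]$, by each of the three pairwise coprime polynomials $\Phi_n(q)$, $1-aq^n$, and $a-q^n$, and then to conclude by the Chinese remainder theorem. (These are pairwise coprime: modulo $\Phi_n(q)$ one has $1-aq^n\equiv 1-a$ and $a-q^n\equiv a-1$, both nonzero, while $1-aq^n$ and $a-q^n$ are plainly non-associate irreducibles.) The divisibilities by $1-aq^n$ and $a-q^n$ form the creative microscoping part of the argument and are obtained by specializing $a=q^{-n}$ and $a=q^{n}$; the divisibility by $\Phi_n(q)$ is a separate pairing argument resting on Lemma~\ref{lem:2.2}.

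For the $\Phi_n(q)$-part, one first notes that $\gcd(d,n)=1$ and, on writing $2n+1=dm$, that $d^{-1}\equiv m\pmod n$; hence $(q;q^d)_k$ already carries the factor $\Phi_n(q)$ once $k>M:=(dn-2n-1)/d$, so that modulo $\Phi_n(q)$ the sum in \eqref{eq:aa-2} may be truncated at $k=M$. On this range I would apply Lemma~\ref{lem:2.2} $d$ times---with its parameter $a$ replaced successively by $1$ and by $a^{\pm1},a^{\pm3},\dots,a^{\pm(d-2)}$, after pairing each numerator factor $a^{\pm j}q$ in \eqref{eq:aa-2} with the denominator factor $a^{\mp j}q^d=q^d/a^{\pm j}$---together with the elementary behaviour of $[2dk+1]$ and of $q^{d(d-3)k/2}$ under $k\mapsto M-k$. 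A short computation then shows that modulo $\Phi_n(q)$ all the resulting powers of $a$ (which occur in $\pm$ pairs) cancel and all the powers of $q$ collapse, so that the summand is anti-invariant under $k\mapsto M-k$; the terms of the truncated sum accordingly cancel in pairs (and, should $M$ be even, the middle term equals its own negative and therefore vanishes, $2$ being invertible modulo $\Phi_n(q)$), which gives the claim.

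For the divisibility by $(1-aq^n)(a-q^n)$, observe that the left-hand side of \eqref{eq:aa-2} is invariant under $a\mapsto a^{-1}$, so it suffices to prove that it vanishes at $a=q^{-n}$. At that point $(a^{d-2}q;q^d)_k$ becomes $(q^{-dM};q^d)_k$, which vanishes for $k>M$, so the sum terminates; the resulting finite sum is a terminating very-well-poised basic hypergeometric series in the base $q^d$, with $q$ in the role of the very-well-poised parameter (so that the very-well-poised factor is precisely $[2dk+1]$). I would then invoke Andrews' multiseries generalization of the Watson transformation \cite[Theorem~4]{Andrews75}---which for $d=5$ reduces to Watson's transformation itself---to rewrite this series as an explicit product of $q$-shifted factorials times a multiple sum, and verify that the whole expression is identically zero: once the free parameters are matched up suitably, one of the $q$-shifted factorials appearing in the prefactor (or in every summand) has the shape $(q^{-d\ell};q^d)_r$ with $0\le\ell<r$ and hence vanishes. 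This yields $\eqref{eq:aa-2}\big|_{a=q^{-n}}=0$, whence divisibility by $1-aq^n$, and, by the $a\mapsto a^{-1}$ symmetry, by $a-q^n$ as well.

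The step I expect to be the main obstacle is this application of Andrews' transformation. Unlike in the proof of Theorem~\ref{thm:a-1}, where a single appeal to the $q$-binomial theorem sufficed, here the ratios of $q$-shifted factorials that survive the specialization $a=q^{-n}$ are genuine rational functions of $q^{dk}$---their exponent gaps are not multiples of $d$---so one must pass through a very-well-poised transformation and then pin down exactly which specialization of its many parameters forces the transformed side to collapse to $0$. The $\Phi_n(q)$-part, by contrast, is essentially bookkeeping, but the cancellation of the numerous powers of $a$ and $q$ produced by the $d$ separate invocations of Lemma~\ref{lem:2.2} should be carried out with care.
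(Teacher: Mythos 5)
Your proposal follows essentially the same route as the paper: the $\Phi_n(q)$ part via the $k\mapsto (dn-2n-1)/d-k$ pairing from Lemma~\ref{lem:2.2}, the $(1-aq^n)(a-q^n)$ part via the specializations $a=q^{\mp n}$ followed by Andrews' transformation \eqref{andrews} with a vanishing $q$-shifted factorial in the prefactor, and coprimality to combine; the parameter matching you leave open is realized in the paper by $q\mapsto q^d$, $a=q$, $m=(d-1)/2$, $N=(dn-2n-1)/d$, $b_m=q^{1-n}$, $c_m=q^{3n+1}$, which makes the prefactor contain $(q^{d-2n-1};q^d)_N=0$. The only slip is your parenthetical that Andrews' identity reduces to Watson's transformation for $d=5$: that happens for $d=3$ (where $m=1$), while $d=5$ already gives the genuine two-fold multisum.
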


\begin{proof}
By Lemma~\ref{lem:2.2}, for $0\leqslant k\leqslant (dn-2n-1)$,
we can check that the $k$-th and $((dn-2n-1)/d-k)$-th terms on the
left-hand side of \eqref{eq:aa-2} modulo $\Phi_n(q)$ cancel each other.
Moreover, for $(dn-2n-1)/d<k\leqslant n-1$, the $q$-shifted factorial
$(q;q^d)_k$ contains the factor $1-q^n$ and is therefore divisible by
$\Phi_n(q)$. This proves that the congruence \eqref{eq:aa-2} is true
modulo $\Phi_n(q)$.

To prove that \eqref{eq:aa-2} is also true modulo $(1-aq^n)(a-q^n)$,
it suffices to prove the following identity:
\begin{align}
\sum_{k=0}^{\frac{dn-2n-1}{d}}&[2dk+1]\frac{(q^{1-(d-2)n}, q^{1-(d-4)n},\ldots,
q^{1-n};q^d)_k  }
{(q^{d-(d-2)n}, q^{d-(d-4)n},\ldots, q^{d-n};q^d)_k  } \notag\\
&\times \frac{(q^{(d-2)n+1}, q^{(d-4)n+1},\ldots,
q^{n+1};q^d)_k (q;q^d)_k}
{(q^{(d-2)n+d}, q^{(d-4)n+d},\ldots, q^{n+d};q^d)_k (q^d;q^d)_k } q^{\frac{d(d-3)k}{2}}
=0,   \label{eq:a-3}
\end{align}
where we have used that $(q^{1-(d-2)n};q^d)_k=0$ for $k>(dn-2n-1)/d$.
This time the method employed to prove \eqref{eq:a-2} does not work. Instead,
we shall use Andrews' multiseries generalization of the Watson transformation
\cite[Theorem~4]{Andrews75}:
\begin{align}
&\sum_{k\geqslant 0}\frac{(a,q\sqrt{a},-q\sqrt{a},b_1,c_1,\dots,b_m,c_m,q^{-N};q)_k}
{(q,\sqrt{a},-\sqrt{a},aq/b_1,aq/c_1,\dots,aq/b_m,aq/c_m,aq^{N+1};q)_k}
\left(\frac{a^mq^{m+N}}{b_1c_1\cdots b_mc_m}\right)^k \notag\\[5pt]
&\quad=\frac{(aq,aq/b_mc_m;q)_N}{(aq/b_m,aq/c_m;q)_N}
\sum_{l_1,\dots,l_{m-1}\geqslant 0}
\frac{(aq/b_1c_1;q)_{l_1}\cdots(aq/b_{m-1}c_{m-1};q)_{l_{m-1}}}
{(q;q)_{l_1}\cdots(q;q)_{l_{m-1}}} \notag\\[5pt]
&\quad\quad\times\frac{(b_2,c_2;q)_{l_1}\dots(b_m,c_m;q)_{l_1+\dots+l_{m-1}}}
{(aq/b_1,aq/c_1;q)_{l_1}
\dots(aq/b_{m-1},aq/c_{m-1};q)_{l_1+\dots+l_{m-1}}} \notag\\[5pt]
&\quad\quad\times\frac{(q^{-N};q)_{l_1+\dots+l_{m-1}}}
{(b_mc_mq^{-N}/a;q)_{l_1+\dots+l_{m-1}}}
\frac{(aq)^{l_{m-2}+\dots+(m-2)l_1} q^{l_1+\dots+l_{m-1}}}
{(b_2c_2)^{l_1}\cdots(b_{m-1}c_{m-1})^{l_1+\dots+l_{m-2}}}.  \label{andrews}
\end{align}

Let $q\mapsto q^d$, $a=q$, $b_1=q^{(d+1)/2}$, $m=(d-1)/2$, and
$N=(dn-2n-1)/d$ in \eqref{andrews}.
Moreover, put
\begin{equation*}
\{c_1,b_2,c_2,\ldots,b_m,c_m\}=
\{q^{1-(d-4)n},q^{1-(d-6)n},\ldots,q^{1-n}, q^{(d-2)n+1}, q^{(d-4)n+1},\ldots, q^{n+1}\}
\end{equation*}
with $b_m=q^{1-n}$ and $c_m=q^{3n+1}$. Then the left-hand side of
\eqref{andrews} reduces to the left-hand side of \eqref{eq:a-3}, while
the right-hand side of \eqref{andrews} contains the factor
\begin{equation*}
(q^{d+1}/b_m c_m;q^d)_N
=(q^{d-2n-1};q^d)_N=0,
\end{equation*}
because $d-2n-1\equiv 0\pmod{d}$, $d-2n-1\leqslant 0$ and $N>(d-2n-1)/d$.
This proves \eqref{eq:a-3}, i.e.\
the congruence \eqref{eq:aa-2} holds modulo $(1-aq^n)(a-q^n)$.
Since the polynomials $\Phi_n(q)$ and $(1-aq^n)(a-q^n)$
are clearly relatively prime, the proof of \eqref{eq:aa-2} is complete.
\end{proof}

\begin{proof}[Proof of Theorem {\rm\ref{thm:main-1}}]
For $n\equiv -1\pmod{d}$, the limits of the denominators in \eqref{eq:a-1}
as $a\to1$ are relatively prime to $\Phi_n(q)$.
On the other hand, the limit of $(1-aq^n)(a-q^n)$ as $a\to1$ has the factor
$\Phi_n(q)^2$.
It follows that the limiting case $a\to 1$ of the congruence \eqref{eq:a-1}
reduces to \eqref{eq:main-1} for the case $n\equiv -1\pmod{d}$.

Similarly, for $n\equiv -\frac{1}{2}\pmod{d}$, the limit of
$\Phi_n(q)(1-aq^n)(a-q^n)$ as $a\to1$ has the factor $\Phi_n(q)^3$,
and so the limiting case $a\to 1$ of the congruence \eqref{eq:aa-2}
reduces to \eqref{eq:main-1} for the case $n\equiv -\frac{1}{2}\pmod{d}$.
This completes the proof of the theorem.
\end{proof}

\section{Proof of Theorem \ref{thm:main-2}}\label{sec:prth2}
The proof of Theorem \ref{thm:main-2} is similar to that of
Theorem~\ref{thm:main-1}. We have the following parametric generalization
of Theorem~\ref{thm:main-2} for the case $n\equiv -1\pmod{d}$. Its proof is
completely analogous to that of Theorem~\ref{thm:a-1} and is left to the
interested reader.
\begin{theorem}\label{thm:main-2-one}
Let $d\geqslant 3$ be an odd integer and let $n\equiv 1\pmod{d}$.
Then modulo $(1-aq^n)(a-q^n)$,
\begin{align*}
&\sum_{k=0}^{n-1}[2dk-1]\frac{(a^{d-1}q^{-1}, a^{d-3}q^{-1},\ldots, a^2q^{-1};q^d)_k}
{(a^{d-2}q^d, a^{d-4}q^d,\ldots,aq^d;q^d)_k   }  \\[5pt]
&\quad\quad\times\frac{(a^{1-d}q^{-1}, a^{3-d}q^{-1},\ldots,
a^{-2}q^{-1};q^d)_k (q^{-1};q^d)_k}
{(a^{2-d}q^d, a^{4-d}q^d,\ldots, a^{-1}q^d;q^d)_k (q^d;q^d)_k }
q^{\frac{d(d-1)k}{2}}\equiv 0.
\end{align*}
\end{theorem}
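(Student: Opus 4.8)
The plan is to prove this exactly as Theorem~\ref{thm:a-1} was proved, via the creative microscoping method. Write $S_n(a,q)$ for the left-hand side. Since $n\equiv1\pmod d$ and $n>1$ we have $\gcd(d,n)=1$, so none of $d,2d,\dots,(n-1)d$ is divisible by $n$; hence the denominators on the left-hand side contain neither the factor $1-aq^n$ nor $1-a^{-1}q^n$, and they remain nonzero under the substitutions $a=q^{-n}$ and $a=q^{n}$. As in \cite{GuoZu} it is therefore enough to show that $S_n(a,q)$ vanishes for $a=q^{-n}$ and for $a=q^{n}$: then $1-aq^n$ and $a-q^n$ each divide the numerator of $S_n(a,q)$, and since these two polynomials are coprime their product divides it as well, which is the assertion. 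Moreover $S_n(a,q)$ is invariant under $a\mapsto a^{-1}$ (this interchanges the two blocks of $q$-shifted factorials both in the numerator and in the denominator), so the two specializations give the same sum and only one needs to be treated.

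Fix $a=q^{-n}$. Then $a^{d-1}q^{-1}=q^{-(d-1)n-1}=q^{-dM}$ with $M:=\frac{(d-1)n+1}{d}$, which is a positive integer because $(d-1)n+1\equiv d\equiv0\pmod d$. Hence $(a^{d-1}q^{-1};q^d)_k=0$ for $k>M$, so $S_n(q^{-n},q)$ collapses to a sum over $0\le k\le M$. I would then extract a $q^d$-binomial coefficient from the pair $(q^{-dM};q^d)_k/(q^d;q^d)_k$ via the identity
\begin{equation*}
\frac{(q^{-dM};q^d)_k}{(q^d;q^d)_k}\,q^{\frac{d(d-1)k}{2}}
=(-1)^k\,q^{d\binom{M-k}{2}-d\binom{M}{2}+\frac{d(d-3)k}{2}}\begin{bmatrix}M\\k\end{bmatrix}_{q^d},
\end{equation*}
and rewrite each remaining quotient of $q$-shifted factorials, paired off as in the proof of Theorem~\ref{thm:a-1}, in the form $(yq^{dk};q^d)_\mu/(y;q^d)_\mu$ with $\mu:=\frac{n-1-d}{d}$ a nonnegative integer; there are exactly $d-1$ such pairs, each contributing degree $\mu$ in $q^{dk}$. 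Together with $[2dk-1]$ (degree $2$ in $q^{dk}$) and the monomial $q^{d(d-3)k/2}$ (degree $\frac{d-3}{2}$) this puts $S_n(q^{-n},q)$, up to a nonzero constant, in the shape $\sum_{k=0}^{M}(-1)^k q^{d\binom{M-k}{2}}\begin{bmatrix}M\\k\end{bmatrix}_{q^d}P(q^{dk})$ for a polynomial $P$ of degree $2+(d-1)\mu+\frac{d-3}{2}=M-\frac{d-1}{2}\le M-1$. By \eqref{eq:qbino}, applied with $q\mapsto q^d$, $n\mapsto M$, and $j=0,\dots,M-1$, such a sum vanishes, and this finishes the proof.

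The only genuinely delicate point is the exponent bookkeeping in the last step: one has to verify that after putting $a=q^{-n}$ the surviving $d-1$ quotients of $q$-shifted factorials really reduce to polynomials in $q^{dk}$ — this uses the hypothesis $n\equiv1\pmod d$ essentially, since it forces $M$ and $\mu$ to be integers — and that the degree of $P$ stays $\le M-1$, so that \eqref{eq:qbino} applies. For $d=3$ the monomial $q^{d(d-3)k/2}$ is trivial and the degree bound is attained with equality, but the argument still goes through, which is consistent with Theorem~\ref{thm:main-2} holding already for $d\ge3$.
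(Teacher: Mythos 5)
Your proposal is correct and is precisely the argument the paper intends: the paper states that the proof of Theorem~\ref{thm:main-2-one} is ``completely analogous to that of Theorem~\ref{thm:a-1}'' and leaves it to the reader, and your specialization at $a=q^{\mp n}$ (using the $a\mapsto a^{-1}$ symmetry), the extraction of $\begin{bmatrix}M\\k\end{bmatrix}_{q^d}$ with $M=\frac{(d-1)n+1}{d}$, the pairing of the remaining $d-1$ quotients into polynomials of degree $\mu=\frac{n-1-d}{d}$ in $q^{dk}$, and the degree bound $\deg P=M-\frac{d-1}{2}\leqslant M-1$ feeding into \eqref{eq:qbino} all check out.
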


Moreover, we have the following result similar to Lemma~\ref{lem:2.2}.
\begin{lemma}\label{lem:3.2}
Let $d$ be a positive odd integer and let $n\equiv \frac{1}{2}\pmod{d}$,
Then for $0\leqslant k\leqslant (dn-2n+1)/d$, modulo $\Phi_n(q)$, we have
\begin{equation*}
\frac{(aq^{-1};q^d)_{(dn-2n+1)/d-k}}{(q^d/a;q^d)_{(dn-2n+1)/d-k}}
\equiv (-a)^{(dn-2n+1)/d-2k}\frac{(aq^{-1};q^d)_k}{(q^d/a;q^d)_k}
q^{(dn-2n-d-1)(dn-2n+1)/(2d)+(d+1)k}.
\end{equation*}
\end{lemma}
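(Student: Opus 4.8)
The plan is to follow the proof of Lemma~\ref{lem:2.2} essentially line by line, with $q$ replaced by $q^{-1}$ in the upper $q$-shifted factorial and with the index $(dn-2n-1)/d$ replaced by $M:=(dn-2n+1)/d$. First I would record that $n\equiv\frac12\pmod d$ means $2n\equiv1\pmod d$, so $dn-2n+1\equiv0\pmod d$ and $M$ is a nonnegative integer (so the range $0\leqslant k\leqslant M$ is meaningful); also $\Phi_n(q)\mid q^n-1\mid q^{(d-2)n}-1$, so that $q^{dn-2n}=q^{(d-2)n}\equiv1\pmod{\Phi_n(q)}$ is the only arithmetic input needed.

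The first step is the case $k=0$, the exact analogue of~\eqref{aqcong}: one shows
\[
\frac{(aq^{-1};q^d)_M}{(q^d/a;q^d)_M}\equiv(-a)^M q^{(dn-2n-d-1)(dn-2n+1)/(2d)}\pmod{\Phi_n(q)}.
\]
To see this, write the denominator as $\prod_{j=1}^{M}(1-q^{jd}/a)$ and use $q^{dn-2n}\equiv1$ to replace each exponent $jd$ by $jd-(dn-2n)=1-(M-j)d$; modulo $\Phi_n(q)$ the denominator becomes $\prod_{i=0}^{M-1}(1-q^{1-id}/a)$. Pulling the monomial $-a^{-1}q^{1-id}$ out of each factor turns this product into $(-1)^M a^{-M}q^{\sum_{i=0}^{M-1}(1-id)}(aq^{-1};q^d)_M$, which cancels against the numerator and leaves exactly the claimed monomial once one evaluates $\sum_{i=0}^{M-1}(1-id)=M-d\binom M2$ and simplifies using $dM=dn-2n+1$.

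For general $k$ I would use the telescoping factorizations $(aq^{-1};q^d)_M=(aq^{-1};q^d)_{M-k}\,(aq^{(M-k)d-1};q^d)_k$ and $(q^d/a;q^d)_M=(q^d/a;q^d)_{M-k}\,(q^{(M-k+1)d}/a;q^d)_k$ to write
\[
\frac{(aq^{-1};q^d)_{M-k}}{(q^d/a;q^d)_{M-k}}
=\frac{(aq^{-1};q^d)_M}{(q^d/a;q^d)_M}\cdot
\frac{(q^{(M-k+1)d}/a;q^d)_k}{(aq^{(M-k)d-1};q^d)_k}.
\]
Reducing the two length-$k$ products modulo $\Phi_n(q)$ via $q^{dn-2n}\equiv1$ turns the numerator into $\prod_{i=1}^{k}(1-q^{id+1-kd}/a)$ and the denominator into $\prod_{i=1}^{k}(1-aq^{-id})$. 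Extracting monomials from each, namely $\prod_{i=1}^{k}(1-q^{id+1-kd}/a)=(-1)^k a^{-k}q^{\sum_{i=1}^{k}(id+1-kd)}(aq^{-1};q^d)_k$ and $\prod_{i=1}^{k}(1-aq^{-id})=(-1)^k a^k q^{-d\sum_{i=1}^{k}i}(q^d/a;q^d)_k$, collapses the correction factor to $a^{-2k}q^{(d+1)k}(aq^{-1};q^d)_k/(q^d/a;q^d)_k$. Multiplying by the $k=0$ formula and using $(-1)^{M-2k}=(-1)^M$ then yields the asserted congruence, with $M=(dn-2n+1)/d$.

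Everything here is routine exponent bookkeeping entirely parallel to Lemma~\ref{lem:2.2}, so there is no genuine obstacle; the only points requiring a little care are to apply the two telescoping identities with the correct base-point exponents $(M-k)d-1$ and $(M-k+1)d$, and to track the quadratic exponent $(dn-2n-d-1)(dn-2n+1)/(2d)$ together with the sign $(-1)^M$ consistently through the manipulations.
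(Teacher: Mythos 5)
Your proposal is correct and follows exactly the route the paper intends: the paper states Lemma~\ref{lem:3.2} without proof as being analogous to Lemma~\ref{lem:2.2}, and your argument is precisely that analogue — establish the $k=0$ case as in \eqref{aqcong} by shifting exponents via $q^{dn-2n}\equiv 1\pmod{\Phi_n(q)}$, then peel off the two length-$k$ tail products and reduce them the same way. I checked the exponent bookkeeping ($d\binom{M}{2}-M=(dn-2n-d-1)(dn-2n+1)/(2d)$ and the collapse of the correction factor to $a^{-2k}q^{(d+1)k}$), and it all comes out right.
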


By Lemma \ref{lem:3.2} and Andrews' transformation \eqref{andrews},
we can establish the following parametric generalization
of Theorem \ref{thm:main-2} for the case $n\equiv \frac{1}{2}\pmod{d}$.
\begin{theorem}\label{thm:main-2-two}
Let $d\geqslant 3$ be an odd integer and let $n\equiv \frac{1}{2}\pmod{d}$.
Then modulo $\Phi_n(q)(1-aq^n)(a-q^n)$,
\begin{align*}
&\sum_{k=0}^{n-1}[2dk-1]\frac{(a^{d-2}q^{-1}, a^{d-4}q^{-1},\ldots, aq^{-1};q^d)_k}
{(a^{d-2}q^d, a^{d-4}q^d,\ldots,aq^d;q^d)_k   }  \\[5pt]
&\quad\quad\times\frac{(a^{2-d}q^{-1}, a^{4-d}q^{-1},\ldots,
a^{-1}q^{-1};q^d)_k (q^{-1};q^d)_k }
{(a^{2-d}q^d, a^{4-d}q^d,\ldots, a^{-1}q^d;q^d)_k (q^d;q^d)_k }
q^{\frac{d(d-1)k}{2}} \equiv 0.
\end{align*}
\end{theorem}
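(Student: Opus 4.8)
The plan is to follow the proof of Theorem~\ref{thm:a-2} almost verbatim, using Lemma~\ref{lem:3.2} in place of Lemma~\ref{lem:2.2} and making the evident replacements $q\mapsto q^{-1}$, $[2dk+1]\mapsto[2dk-1]$, $q^{\frac{d(d-3)k}{2}}\mapsto q^{\frac{d(d-1)k}{2}}$, and $(dn-2n-1)/d\mapsto(dn-2n+1)/d$. Since $\Phi_n(q)$ and $(1-aq^n)(a-q^n)$ are relatively prime, it suffices to establish the congruence modulo each factor separately.

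First I would prove the congruence modulo $\Phi_n(q)$. Put $M=(dn-2n+1)/d$, which is a positive integer because $n\equiv\frac12\pmod d$. For $0\leqslant k\leqslant M$ I would pair the $k$-th summand with the $(M-k)$-th summand: applying Lemma~\ref{lem:3.2} with $a$ replaced successively by $a^{\pm1},a^{\pm3},\ldots,a^{\pm(d-2)}$ and by $1$ turns each of the $d$ quotients $\frac{(a^jq^{-1};q^d)_{M-k}}{(a^jq^d;q^d)_{M-k}}$ and $\frac{(q^{-1};q^d)_{M-k}}{(q^d;q^d)_{M-k}}$ into the corresponding quotient with subscript $k$ times an explicit monomial in $a$ and $q$; and since $q^n\equiv1\pmod{\Phi_n(q)}$ the factor $[2d(M-k)-1]$ collapses to a monomial times $[2dk-1]$. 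Collecting these monomials as in the proof of Theorem~\ref{thm:a-2} shows that the $k$-th and $(M-k)$-th terms cancel modulo $\Phi_n(q)$. For $M<k\leqslant n-1$, the factor $1-q^{(d-2)n}$ (namely the $j=M$ factor, as $Md-1=(d-2)n$) occurs in $(q^{-1};q^d)_k$, so the $k$-th term is divisible by $\Phi_n(q)$. Hence the congruence holds modulo $\Phi_n(q)$.

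Next I would prove the congruence modulo $(1-aq^n)(a-q^n)$. Since $\gcd(d,n)=1$, none of the denominators vanishes at $a=q^{\pm n}$, so it is enough to show the left-hand side is $0$ when $a=q^{\pm n}$; the two specializations coincide because the summand is invariant under $a\mapsto a^{-1}$. Taking $a=q^{-n}$, the sum terminates at $k=M$ and becomes
\begin{equation*}
\sum_{k=0}^{M}[2dk-1]\,
\frac{(q^{-1-(d-2)n},\ldots,q^{-1-n};q^d)_k\,(q^{-1+n},\ldots,q^{-1+(d-2)n};q^d)_k\,(q^{-1};q^d)_k}
{(q^{d-(d-2)n},\ldots,q^{d-n};q^d)_k\,(q^{d+n},\ldots,q^{d+(d-2)n};q^d)_k\,(q^d;q^d)_k}\,q^{\frac{d(d-1)k}{2}}.
\end{equation*}
I would evaluate this via Andrews' transformation~\eqref{andrews} under $q\mapsto q^d$, with $a=q^{-1}$, $b_1=q^{(d-1)/2}$ (so $b_1=aq^d/b_1$ and the corresponding numerator and denominator factors cancel), $m=(d-1)/2$, $N=M$, and $\{c_1,b_2,c_2,\ldots,b_m,c_m\}$ equal to the remaining $d-2$ numerator parameters $q^{-1-jn}$ $(j=1,3,\ldots,d-4)$ and $q^{-1+jn}$ $(j=1,3,\ldots,d-2)$, arranged so that $b_mc_m=q^{2n-2}$ for $d\geqslant5$ (for instance $b_m=q^{-1-n}$, $c_m=q^{-1+3n}$), while for $d=3$ one takes $b_m=q$, $c_m=q^{n-1}$, so that $b_mc_m=q^{n}$. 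With these choices the left-hand side of~\eqref{andrews} reduces to $-q$ times the sum above, whereas its right-hand side carries the prefactor $(aq^d/b_mc_m;q^d)_N$, which equals $(q^{d+1-2n};q^d)_N$ when $d\geqslant5$ and $(q^{2-n};q^d)_N$ when $d=3$; in both cases the exponent is a non-positive multiple of $d$ and $N$ is large enough that this product contains a zero factor. Hence the sum above vanishes, giving the congruence modulo $(1-aq^n)(a-q^n)$, and the theorem follows.

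The step I expect to be the main obstacle is, as in Theorem~\ref{thm:a-2}, the bookkeeping needed to confirm that this is exactly the right specialization of~\eqref{andrews}: one must verify that after $q\mapsto q^d$ the monomial $\bigl(a^mq^{m+N}/(b_1c_1\cdots b_mc_m)\bigr)^k$ produces precisely $q^{\frac{d(d-1)k}{2}}$ and that every parameter $b_i,c_i,aq^d/b_i,aq^d/c_i$ matches a factor of the terminating sum. This is entirely parallel to the computation carried out for Theorem~\ref{thm:a-2}, the only extra point being the degenerate choice of $b_m,c_m$ that is forced when $d=3$.
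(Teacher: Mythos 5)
Your proof is correct and follows essentially the same route the paper intends for this theorem (which it only sketches by reference to Lemma~\ref{lem:3.2} and \eqref{andrews}): the term-pairing argument via Lemma~\ref{lem:3.2} for the modulus $\Phi_n(q)$, and the specialization $a=q^{\pm n}$ evaluated by Andrews' transformation with parameter choices exactly parallel to those in the proof of Theorem~\ref{thm:a-2}, the exponent bookkeeping indeed yielding $q^{d(d-1)k/2}$ and a vanishing prefactor $(aq^d/b_mc_m;q^d)_N$. Your explicit handling of the degenerate case $d=3$ (where the choice $b_m=q^{-1-n}$, $c_m=q^{-1+3n}$ is unavailable and one must take $b_m=q$, $c_m=q^{n-1}$ in a terminating ${}_6\phi_5$) is a useful addition; the paper instead covers $d=3$ separately via the closed-form identity stated at the end of Section~\ref{sec:prth2}.
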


The proof of Theorem \ref{thm:main-2} then follows from
Theorems~\ref{thm:main-2-one} and \ref{thm:main-2-two}
by taking the limit $a\to 1$.

Finally, we point out that for $d=3$ and any $n>0$ the sum in
Theorem~\ref{thm:main-2-two} has a closed form as follows:
\begin{equation*}
\sum_{k=0}^{n-1}[6k-1]\frac{(aq^{-1},q^{-1}/a,q^{-1};q^3)_k}
{(aq^{3},q^{3}/a,q^{3};q^3)_k}q^{3k}
=[3n-2][3n-4]\frac{(aq^{2},q^{2}/a,q^{-1};q^3)_{n-1} }
{(aq^{3},q^{3}/a,q^{3};q^3)_{n-1} },
\end{equation*}
which can be easily proved by induction on $n$.
The $a=1$ case implies that when $d=3$, the congruence \eqref{eq:main-2}
modulo $\Phi_n(q)^3$ is still true for $n\equiv 1\pmod{3}$ and $n>1$.

\section{Proof of Theorem \ref{thm:main-3}}\label{sec:prth3}
By induction on $N$, we can easily prove that for $N>1$,
\begin{align}
\sum_{k=0}^{N-1}\frac{(q^{-1};q^2)_k^2}{(q^2;q^2)_k^2} q^{2k}
&=\frac{(q;q^2)_{N-1}^2}{(q^2;q^2)_{N-1}^2}(2[2N-3]+q^{2N-2}) \notag\\[5pt]
&= \begin{bmatrix}2N-2\\N-1\end{bmatrix}^2
\frac{2[2N-3]+q^{2N-2}}{(-q;q)_{N-1}^4}. \label{eq:inducton}
\end{align}
Note that $\frac{1}{[N]}\begin{bmatrix}\begin{smallmatrix}2N-2\\
N-1\end{smallmatrix}\end{bmatrix}$ is the well-known $q$-Catalan
number which is a polynomial in $q$ (see \cite{FH}). Thus
$[N]$ divides
$\begin{bmatrix}\begin{smallmatrix}2N-2\\N-1\end{smallmatrix}\end{bmatrix}$.
Moreover, it is easy to see that $[N]=\frac{1-q^N}{1-q}$ is relatively
prime to $(-q;q)_{N-1}$ for odd $N$. We conclude that \eqref{eq:main-3-1}
holds by taking $N=n$ in \eqref{eq:inducton}.

Letting $N=(n+3)/2$ in \eqref{eq:inducton}, we obtain
\begin{align*}
\sum_{k=0}^{\frac{n+1}{2}}\frac{(q^{-1};q^2)_k^2}{(q^2;q^2)_k^2} q^{2k}
=\begin{bmatrix}n+1\\(n+1)/2\end{bmatrix}^2
\frac{2[n]+q^{n+1}}{(-q;q)_{(n+1)/2}^4}.
\end{align*}
It is clear that
\begin{align*}
\frac{[(n+1)/2]}{[n]}\begin{bmatrix}n\\(n-1)/2\end{bmatrix}
=\begin{bmatrix}n-1\\(n-1)/2\end{bmatrix}
\end{align*}
is a polynomial in $q$. Since the polynomials $[(n+1)/2]$ and $[n]$ are
relatively prime, we deduce that
$\begin{bmatrix}\begin{smallmatrix}n\\(n-1)/2\end{smallmatrix}\end{bmatrix}$
is divisible by $[n]$, and so is
$\begin{bmatrix}\begin{smallmatrix}n+1\\(n+1)/2\end{smallmatrix}\end{bmatrix}
=(1+q^{(n+1)/2})\begin{bmatrix}
\begin{smallmatrix}n\\(n-1)/2\end{smallmatrix}\end{bmatrix}$.
The proof of \eqref{eq:main-3-2} then follows from the fact that $[n]$
is relatively prime to $(-q;q)_{(n+1)/2}$.

\section{Proof of Theorem \ref{thm:main-4}}\label{sec:prth4}
By induction on $n$, we can prove that for $n\geqslant 1$
\begin{equation}
\sum_{k=0}^{n-1}\frac{(q^{-1},q^{-2};q^3)_k}{(q^3;q^3)_k^2} q^{3k}
=\frac{(2+q^{3n}-q-q^2-q^{3n-3})(q,q^{2};q^3)_{n-1} }
{(1-q)(1-q^2) (q^3;q^3)_{n-1}^2}.  \label{induction-2}
\end{equation}
We now assume that $n\geqslant 4$ and $\gcd(n,3)=1$.
If $n\equiv 1\pmod{3}$, then $(q;q^3)_{n-1}$ contains the factor $1-q^n$
and $(q^2;q^3)_{n-1}$ contains the factor $1-q^{2n}$,
and therefore $(q,q^{2};q^3)_{n-1}$ is divisible  by $\Phi_n(q)^2$.
If $n\equiv 2\pmod{3}$, then
$(q;q^3)_{n-1}$ contains $1-q^{2n}$ and $(q^2;q^3)_{n-1}$ contains $1-q^{n}$,
and $(q,q^{2};q^3)_{n-1}$ is also divisible  by $\Phi_n(q)^2$.
Clearly, the denominator of the right-hand side of
\eqref{induction-2} is relatively prime to $\Phi_n(q)$.
This completes the proof.

\section{More congruences modulo $\Phi_n(q)^2$}\label{sec:more}

The first author \cite[Theorem 1.4]{Guo-m=d} proved that for $n>1$,
\begin{align}
\sum_{k=0}^{n-1}\frac{(q,q,q^4;q^6)_k}{(q^6;q^6)_k^3} q^{6k}
&\equiv 0 \pmod{\Phi_n(q)^2}\quad\text{if $n\equiv 5\pmod{6}$,}
\label{main-6-5}\\\intertext{and}
\sum_{k=0}^{n-1}\frac{(q^{-1}, q^{-1},q^{-4};q^6)_k}{(q^6;q^6)_k^3} q^{6k}
&\equiv 0 \pmod{\Phi_n(q)^2} \quad\text{if $n\equiv 1\pmod{6}$}.
\label{main-6-1}
\end{align}
Here we give generalizations of the $q$-congruences \eqref{main-6-5} and
\eqref{main-6-1} as follows.
\begin{theorem} Let $d\geqslant 3$ and let $r$ be a nonzero integer with
$|r|<d$ and $2r\ne\pm d$.
Let $n>1$ be an integer with $n\geqslant d-r$. Then
\begin{equation}
\sum_{k=0}^{n-1}\frac{(q^r,q^r,q^{d-2r};q^d)_k}
{(q^d;q^d)_k^3 }q^{dk} \equiv 0\pmod{\Phi_n(q)^2}\quad
\text{for  $n\equiv -r\pmod{d}$}. \label{eq:more-1}
\end{equation}
\end{theorem}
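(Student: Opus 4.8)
The plan is to prove the identity
\begin{align*}
\sum_{k=0}^{n-1}\frac{(q^r,q^r,q^{d-2r};q^d)_k}{(q^d;q^d)_k^3}q^{dk}
=\frac{(q^r,q^r,q^{d-2r};q^d)_{n-1}}{(q^d;q^d)_{n-1}^3}\,R(q),
\end{align*}
for some explicit Laurent polynomial $R(q)$ (independent of $n$ apart from the occurrence of $q^n$ in finitely many places), exactly in the spirit of \eqref{eq:inducton} and \eqref{induction-2}. Concretely, I expect a closed form of the shape
\begin{align*}
\sum_{k=0}^{n-1}\frac{(q^r,q^r,q^{d-2r};q^d)_k}{(q^d;q^d)_k^3}q^{dk}
=\frac{(q^r,q^r,q^{d-2r};q^d)_{n-1}}{(1-q^r)^2(1-q^{d-2r})(q^d;q^d)_{n-1}^3}
\bigl(A+Bq^{dn}+Cq^{d(n-1)}\bigr),
\end{align*}
where $A,B,C$ are fixed polynomials in $q$ depending only on $d$ and $r$ (for $d=6$, $r=1$ this must specialize to the right-hand side of \cite[Theorem 1.4]{Guo-m=d}, i.e.\ $A+Bq^{6n}+Cq^{6n-6}=2+q^{6n}-q-q^2-q^{6n-6}$). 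First I would pin down $A,B,C$ by comparing the $n=1$ and $n=2$ cases (which forces two linear relations) together with the requirement that the claimed formula satisfy the defining recurrence $S_{n+1}-S_n=\frac{(q^r,q^r,q^{d-2r};q^d)_n}{(q^d;q^d)_n^3}q^{dn}$; this last condition, after clearing the common factor $(q^r,q^r,q^{d-2r};q^d)_{n-1}/(q^d;q^d)_{n-1}^3$, collapses to a polynomial identity in $q^n$ that determines the coefficients and is then verified by a routine (if slightly tedious) algebraic manipulation. The induction step itself is then immediate.

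Once the closed form is in hand, the congruence is extracted just as in Sections~\ref{sec:prth3} and~\ref{sec:prth4}. Assume $n>1$, $n\geqslant d-r$ and $n\equiv -r\pmod d$. Then in the numerator factor $(q^r;q^d)_{n-1}$ the term $1-q^{r+d\cdot((n-r)/d - ?)}$ with exponent $r + d\cdot\frac{n-r}{d}\cdot\ldots = n$ occurs provided $r\le r+d\le\cdots\le n\le r+d(n-2)$, i.e.\ provided $n\ge d-r$ (so that the arithmetic progression $r, r+d, r+2d,\dots$ up to $r+d(n-2)$ actually reaches $n$); hence $\Phi_n(q)\mid (q^r;q^d)_{n-1}$. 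Because the numerator contains $(q^r;q^d)_{n-1}$ \emph{twice} (the factor is $(q^r,q^r,q^{d-2r};q^d)_{n-1}$), we get $\Phi_n(q)^2\mid (q^r,q^r,q^{d-2r};q^d)_{n-1}$, and so $\Phi_n(q)^2$ divides the right-hand side of the closed form, since the denominator $(1-q^r)^2(1-q^{d-2r})(q^d;q^d)_{n-1}^3$ is coprime to $\Phi_n(q)$: the exponents $r$, $d-2r$ and the multiples $d,2d,\dots,(n-1)d$ are all $\not\equiv 0\pmod n$ under the hypotheses $0<|r|<d$, $2r\ne\pm d$, $\gcd(d,n)=1$ (which follows from $n\equiv -r\pmod d$ with $\gcd(r,d)$ irrelevant once one checks $n\nmid r$, guaranteed by $n\ge d-r>|r|$). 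This yields \eqref{eq:more-1}.

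The main obstacle I anticipate is not the congruence extraction, which is routine, but rather guessing and then \emph{rigorously verifying} the exact closed form with the correct universal polynomials $A$, $B$, $C$ as functions of $d$ and $r$; getting the symmetry of the ansatz right (so that a single inductive identity works uniformly for all admissible $d,r$, including both signs of $r$) and bookkeeping the telescoping correctly will require some care. A secondary subtlety is the sharpness of the bound $n\ge d-r$: one must check that for $r<0$ (where $d-r>d$) the progression $r, r+d,\dots$ still hits $n$, and that the degenerate excluded cases $r=0$ and $2r=\pm d$ are exactly the ones where either the denominator factor $(1-q^r)$ or $(1-q^{d-2r})$ would clash with $\Phi_n(q)$ or where the closed form degenerates. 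If the clean three-term ansatz turns out to be insufficient for general $d$, the fallback is to absorb $R(q)$ into a slightly larger finite combination $\sum_j c_j q^{jn}$ with $|j|$ bounded in terms of $d$; the argument is unaffected as long as each $c_j$ is coprime to $\Phi_n(q)$, which it is since $\deg_q R$ stays bounded while $n\to\infty$.
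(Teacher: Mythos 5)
Your strategy collapses at its first step: the sum $\sum_{k=0}^{n-1}\frac{(q^r,q^r,q^{d-2r};q^d)_k}{(q^d;q^d)_k^3}q^{dk}$ does \emph{not} admit a closed form of the shape you propose (a fixed Laurent polynomial in $q^{dn}$ multiplying the hypergeometric term $\frac{(q^r,q^r,q^{d-2r};q^d)_{n-1}}{(q^d;q^d)_{n-1}^3}$), and no bounded enlargement of the ansatz $\sum_j c_jq^{jn}$ repairs this. The evaluations \eqref{eq:inducton} and \eqref{induction-2} you are modelling on concern ${}_2\phi_1$-type terms (two numerator parameters against a squared denominator), which happen to be $q$-Gosper summable; the present term is of ${}_3\phi_2$ type and is not. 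Concretely, take the admissible case $d=3$, $r=1$, so the summand is $\frac{(q;q^3)_k^3}{(q^3;q^3)_k^3}q^{3k}$: calibrating $A,B,C$ from $n=1,2$ forces $A=(1-q^3)^2$ and $C=(1-q)^3-(1-q^3)^2-Bq^3$, after which the $B$-terms cancel in the $n=3$ instance, which becomes the polynomial identity $(1-q^4)^3(1-q^3)^2=(1-q^6)^2\bigl[(1-q^3)^3+q^3(1-q)^3\bigr]$; this is false (at $q=2$ the two sides are $-165375$ and $-1393119$). Allowing $R$ to be quadratic in $q^{dn}$ fails at $n=4$ by a similar numerical check, and the telescoping equation $(1-q^{r-d}x)^2(1-q^{-2r}x)R(q^dx)-(1-x)^3R(x)=x(1-q^{r-d}x)^2(1-q^{-2r}x)$ is over-determined for any bounded degree. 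Your calibration point is also a misreading: \cite[Theorem 1.4]{Guo-m=d}, i.e.\ \eqref{main-6-5}--\eqref{main-6-1}, is a congruence, not an evaluation; the closed form \eqref{induction-2} belongs to the two-parameter sum of Theorem~\ref{thm:main-4}.

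The congruence-extraction step would also be unsound even if a closed form existed. Under $n\equiv -r\pmod d$ it is $(n+r)/d$, not $(n-r)/d$, that is an integer (and $(n-r)/d$ is never an integer here since $2r\ne\pm d$ excludes $d\mid 2r$), so the exponent $n$ need not occur in $(q^r;q^d)_{n-1}$: for $d=5$, $r=1$, $n=4$ (allowed, as $n\geqslant d-r$) one has $(q;q^5)_3=(1-q)(1-q^6)(1-q^{11})$, which is not divisible by $\Phi_4(q)=1+q^2$, so $\Phi_n(q)^2$ does not divide $(q^r,q^r,q^{d-2r};q^d)_{n-1}$ in this case. Likewise $\gcd(d,n)=1$ does not follow from $n\equiv-r\pmod d$ (take $d=6$, $r=2$, $n=4$), and then $(q^d;q^d)_{n-1}$ is itself divisible by $\Phi_n(q)$, undermining the coprimality of the denominator. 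The paper proves \eqref{eq:more-1} by creative microscoping instead: it inserts powers of a parameter $a$ to obtain $\sum_{k=0}^{n-1}\frac{(a^{d-1}q^r,a^{1-d}q^r,q^{d-2r};q^d)_k}{(a^{d-2}q^d,a^{2-d}q^d,q^d;q^d)_k}q^{dk}\equiv 0\pmod{(1-aq^n)(a-q^n)}$ --- proved by checking that the sum vanishes identically at $a=q^{\pm n}$ via the $q$-binomial theorem \eqref{eq:qbino}, exactly as in the proof of Theorem~\ref{thm:a-1} --- and then lets $a\to1$ to collect the factor $\Phi_n(q)^2$. Some mechanism of this kind is needed; a closed-form evaluation is not available here.
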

\begin{proof}The proof is similar to that of Theorem~\ref{thm:main-1}
(or \cite[Theorem 1.4]{Guo-m=d}). Here we merely give
the parametric generalization of \eqref{eq:more-1}:
\begin{equation*}
\sum_{k=0}^{n-1}\frac{(a^{d-1}q^r,a^{1-d}q^r,q^{d-2r};q^d)_k}
{(a^{d-2}q^d,a^{2-d}q^d,q^d;q^d)_k}q^{dk} \equiv 0\pmod{(1-aq^n)(a-q^n)}.\qedhere
\end{equation*}
\end{proof}

Note that when $d=4$ and $r=1$ the $q$-congruence \eqref{eq:more-1} was originally conjectured in
\cite[Conjecture 5.5]{GuoZu}.

\begin{theorem} Let $d\geqslant 3$ and let $0<r<d$ with $2r\ne d$.
Let $n\geqslant d+r$ be an integer. Then
\begin{equation}
\sum_{k=0}^{n-1}\frac{(q^{-r},q^{-r},q^{2r-d};q^d)_k}
{(q^d;q^d)_k^3 }q^{dk} \equiv 0\pmod{\Phi_n(q)^2}\quad
\text{for $n\equiv r\pmod{d}$}. \label{eq:more-2}
\end{equation}
\end{theorem}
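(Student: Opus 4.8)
The plan is to mimic the proof of the first theorem of this section (equation \eqref{eq:more-1}), replacing $q^r$ by $q^{-r}$ throughout and adjusting the congruence condition from $n\equiv -r$ to $n\equiv r\pmod d$. Concretely, I would first state and prove the parametric generalization of \eqref{eq:more-2}, namely that modulo $(1-aq^n)(a-q^n)$,
\begin{equation*}
\sum_{k=0}^{n-1}\frac{(a^{d-1}q^{-r},a^{1-d}q^{-r},q^{2r-d};q^d)_k}
{(a^{d-2}q^d,a^{2-d}q^d,q^d;q^d)_k}q^{dk} \equiv 0.
\end{equation*}
Since $\gcd(d,n)=1$ under the hypothesis $n\equiv r\pmod d$ with $0<r<d$, none of the denominator factors $1-a^{\pm(d-2)}q^{dk+d}$ contains $1-aq^n$ or $1-a^{-1}q^n$, so it suffices to verify the identity obtained by substituting $a=q^{-n}$ and $a=q^n$. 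In either case the numerator factor $(q^{(d-1)n-r};q^d)_k$ or its $a^{1-d}$ counterpart becomes $(q^{-n-r};q^d)_k$ (after reducing exponents mod $dn$, using that the sum effectively truncates), which vanishes once $k$ exceeds $(dn-n-r)/d$; this truncates the sum to a finite range, and the remaining terms should telescope to zero by the same $q$-binomial-theorem argument as in the proof of Theorem~\ref{thm:a-1}.

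The key steps, in order: (i) confirm $\gcd(d,n)=1$ and hence that the denominators are coprime to $1-a^{\pm 1}q^n$; (ii) set $a=q^{\mp n}$, reduce the numerator exponents modulo $dn$ using $q^{dn}$-periodicity inside the truncated sum, and identify the vanishing $q$-shifted factorial $(q^{-n-r};q^d)_{k}$ that cuts the sum down to $k\le (dn-n-r)/d$; (iii) rewrite the truncated sum, as in \eqref{qdk-0}–\eqref{eq:a-p}, in the form $\sum_k (-1)^k q^{d\binom{M-k}{2}}\begin{bmatrix}M\\k\end{bmatrix}_{q^d}P(q^{dk})$ where $M=(dn-n-r)/d$ and $P$ is a polynomial in $q^{dk}$ of degree strictly less than $M$; (iv) invoke \eqref{eq:qbino} (with $q\mapsto q^d$) to conclude the sum is zero; (v) finally let $a\to1$: the denominator limits are coprime to $\Phi_n(q)$ while $(1-aq^n)(a-q^n)\to(1-q^n)^2$ carries the factor $\Phi_n(q)^2$, and the right-hand side limit is plainly finite, so \eqref{eq:more-2} follows. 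The hypothesis $n\ge d+r$ guarantees $M\ge d-1\ge 0$ and ensures the degree bound in step (iii) holds; the condition $2r\ne d$ is what keeps the middle numerator factor $q^{2r-d}$ nontrivial and distinct from the others so the creative-microscoping setup is non-degenerate.

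The main obstacle I anticipate is step (iii): verifying that the degree of the polynomial $P(q^{dk})$ coming from the three ``ratio'' factors (the two $a$-dependent ones specialized at $a=q^{\mp n}$, plus $(q^{2r-d};q^d)_k/(q^d;q^d)_k$ contributes to the Vandermonde-type rewriting) together with the $[2dk\pm\cdots]$-free weight $q^{dk}$ and the leftover power of $q$ from \eqref{qdk-0} stays strictly below $M=(dn-n-r)/d$, so that \eqref{eq:qbino} applies. This is the same bookkeeping that appears in the proof of Theorem~\ref{thm:a-1}, but the shift $q^{-r}$ versus $q^r$ changes several exponents, so the inequality must be rechecked carefully; the condition $n\ge d+r$ should be exactly what makes it work, and $|r|<d$, $2r\ne d$ rule out the degenerate cases. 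Everything else is a routine transcription of the arguments already given for \eqref{eq:more-1} and Theorem~\ref{thm:main-1}.
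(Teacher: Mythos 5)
Your overall strategy is exactly the paper's: the authors' entire proof consists of stating the very parametric congruence you wrote down (your display matches theirs verbatim) and remarking that the argument is analogous to that of Theorem~\ref{thm:a-1}, followed by the limit $a\to 1$. So the approach is right, and the degree count you defer in step~(iii) does close. However, three of your intermediate claims need repair. First, $\gcd(d,n)=1$ is simply false in general under $n\equiv r\pmod d$ (take $d=6$, $r=2$, $n=8$); what actually keeps the denominators away from $1-aq^n$ and $1-a^{-1}q^n$ is the hypothesis $2r\ne d$: a factor $1-a^{\pm(d-2)}q^{dj}$ can pick up such a zero at $a=q^{\mp n}$ only if $d\mid(d-2)n$, i.e.\ $d\mid 2n$, i.e.\ $d\mid 2r$, i.e.\ $2r=d$. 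Second, there is no ``reduction mod $dn$'' available: after specializing $a=q^{\pm n}$ one must prove an exact identity, not a congruence. At $a=q^{-n}$ the vanishing numerator factor is $(a^{d-1}q^{-r};q^d)_k=(q^{-(d-1)n-r};q^d)_k=(q^{-dM};q^d)_k$ with $M=((d-1)n+r)/d=(dn-n+r)/d$ (note the sign: not $(dn-n-r)/d$), and since $n\geqslant d+r$ one has $M\leqslant n-1$, so the sum truncates at $k\leqslant M$. The case $a=q^{n}$ reduces to $a=q^{-n}$ because the summand is invariant under $a\mapsto a^{-1}$.

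As for the bookkeeping you flag as the main obstacle: one finds $(q^{-dM};q^d)_k\,q^{dk}/(q^d;q^d)_k=(-1)^k q^{d\binom{M-k}{2}-d\binom{M}{2}}\begin{bmatrix}\begin{smallmatrix}M\\k\end{smallmatrix}\end{bmatrix}_{q^d}$ with no leftover linear term in $k$, while the remaining two ratios pair up as $(q^{(d-1)n-r};q^d)_k/(q^{(d-2)n+d};q^d)_k$ and $(q^{2r-d};q^d)_k/(q^{d-(d-2)n};q^d)_k$, which by the telescoping trick used after \eqref{qdk-0} are polynomials in $q^{dk}$ of degrees $(n-r-d)/d$ and $((d-2)n+2r-2d)/d$, respectively; the hypotheses $n\geqslant d+r$ and $2r\ne d$ guarantee these are nonnegative integers and that the constant denominators are nonzero. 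Hence $P(q^{dk})$ has degree $(n-r-d)/d+((d-2)n+2r-2d)/d=M-3<M$, and \eqref{eq:qbino} (with $q\mapsto q^d$) kills the sum. With these corrections your outline becomes a complete proof along the paper's intended lines.
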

\begin{proof}This time the parametric generalization of \eqref{eq:more-2}
is as follows:
\begin{equation*}
\sum_{k=0}^{n-1}\frac{(a^{d-1}q^{-r},a^{1-d}q^{-r},q^{2r-d};q^d)_k}
{(a^{d-2}q^d,a^{2-d}q^d,q^d;q^d)_k}q^{dk} \equiv 0\pmod{(1-aq^n)(a-q^n)}.\qedhere
\end{equation*}
\end{proof}

\section{Concluding remarks and open problems}\label{sec:rems}
The creative microscoping method used to prove Theorems~\ref{thm:main-1} and
\ref{thm:main-2} can be used to prove many other $q$-congruences
(see \cite{Guo-par,Guo-m=d,GS,GuoZu,GuoZu2}).
We also learned that this method has already caught
the interests of Gorodetsky~\cite{Gorodetsky}, Guillera~\cite{Guillera3}
and Straub~\cite{Straub}.
However, to the best of our knowledge,
the (creative) method of adding extra parameters can only be used to prove $q$-congruences modulo $\Phi_n(q)^3$ or
$\Phi_n(q)^2$ but not those modulo $\Phi_n(q)^4$ or higher powers of $\Phi_n(q)$.
The following conjectural refinements of Theorems~\ref{thm:main-1} and
\ref{thm:main-2} seem to be rather challenging to prove.
\begin{conjecture}
Let $d\geqslant 5$ be an odd integer. Then
\begin{equation*}
\sum_{k=0}^{n-1}[2dk+1]\frac{(q;q^d)_k^d}{(q^d;q^d)_k^d}q^{\frac{d(d-3)k}{2}}
\equiv
\begin{cases} 0\pmod{\Phi_n(q)^3}, &\text{if $n\equiv -1\pmod{d}$,}\\[5pt]
0\pmod{\Phi_n(q)^4}, &\text{if $n\equiv -\frac{1}{2}\pmod{d}$.}
\end{cases}
\end{equation*}
\end{conjecture}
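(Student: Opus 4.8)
The plan is to push the creative-microscoping arguments of Theorems~\ref{thm:a-1} and \ref{thm:a-2} up by one further power of $\Phi_n(q)$ in each of the two cases. For $n\equiv-1\pmod d$, where modulus $\Phi_n(q)^3$ is sought, it would suffice to produce a parametric generalization of the sum in the conjecture --- most plausibly the one already appearing in Theorem~\ref{thm:a-1}, possibly after redistributing some powers of $a$ among the $q$-shifted factorials --- that vanishes modulo $\Phi_n(q)(1-aq^n)(a-q^n)$; letting $a\to1$ would then give $\Phi_n(q)\cdot\Phi_n(q)^2=\Phi_n(q)^3$. Since Theorem~\ref{thm:a-1} already supplies the factor $(1-aq^n)(a-q^n)$, the only new ingredient needed is that the \emph{same} parametric sum vanish modulo $\Phi_n(q)$ for generic $a$. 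The natural approach is a pairing argument modelled on Lemma~\ref{lem:2.2}: using $q^n\equiv1\pmod{\Phi_n(q)}$, write $n=md-1$ with $m=(n+1)/d$ so that $(dn-n-1)/d=n-m$, observe that every summand with $k>n-m$ is individually divisible by $\Phi_n(q)$ (because then $(q;q^d)_k$ contains the factor $1-q^{n(d-1)}$), and try to show that, modulo $\Phi_n(q)$, the $k$-th and $(n-m-k)$-th summands cancel for $0\leqslant k\leqslant n-m$.

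The delicate point is that the weight $[2dk+1]$ and the monomial $q^{d(d-3)k/2}$ are \emph{not} symmetric under $k\mapsto n-m-k$; making the termwise cancellation work modulo $\Phi_n(q)$ --- as opposed to merely modulo $(1-aq^n)(a-q^n)$, which is all that Theorem~\ref{thm:a-1} requires --- is exactly where the parameter $a$ (and, as the paper already warns, its higher powers $a^2,a^3,\dots$) must be inserted with great care, and I expect this bookkeeping to be the main obstacle already for the $\Phi_n(q)^3$ assertion. Should no single-parameter generalization do the job, the fallback is a two-parameter version congruent to $0$ modulo $(1-aq^n)(a-q^n)(1-bq^n)(b-q^n)$, whose four specializations $a,b\in\{q^n,q^{-n}\}$ (including the ``mixed'' ones $a=q^n$, $b=q^{-n}$ and $a=q^{-n}$, $b=q^n$) would each be summed by Andrews' multiseries transformation \eqref{andrews}, as in the proof of Theorem~\ref{thm:a-2}, or by iterating it; there the difficulty migrates to choosing the placement of $b$ so that all four specializations remain evaluable.

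For $n\equiv-\tfrac12\pmod d$, where modulus $\Phi_n(q)^4$ is sought, the situation is considerably harder. Theorem~\ref{thm:a-2} already supplies a factor $\Phi_n(q)$ (through Lemma~\ref{lem:2.2}) together with $(1-aq^n)(a-q^n)$ (through Andrews' transformation), i.e.\ $\Phi_n(q)^3$ after $a\to1$. To gain the fourth power one would either introduce a second parameter $b$ and prove a congruence modulo $\Phi_n(q)(1-aq^n)(a-q^n)(1-bq^n)(b-q^n)$, or strengthen the pairing of Lemma~\ref{lem:2.2} to hold modulo $\Phi_n(q)^2$ and combine it with the single parameter $a$. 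Either route is, I expect, the genuine bottleneck: as noted in Section~\ref{sec:rems}, creative microscoping with added parameters is not presently known to reach modulus $\Phi_n(q)^4$, essentially because setting the extra parameter to $q^{\pm n}$ produces multiple basic hypergeometric sums that Andrews' transformation --- or any identity currently at our disposal --- cannot evaluate. Thus, while the skeleton above (parametric lift $\Rightarrow$ pairing modulo $\Phi_n(q)$, or modulo $\Phi_n(q)^2$ $\Rightarrow$ Andrews' transformation for the parameter specializations $\Rightarrow$ limit $a\to1$) is the line of attack I would pursue, I anticipate that a proof of the $\Phi_n(q)^4$ case will require a genuinely new idea --- a Dwork-type lifting argument, a suitable $q$-WZ pair, or an as-yet-undiscovered multivariate transformation --- rather than a routine elaboration of the methods of the present paper.
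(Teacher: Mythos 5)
This statement is one of the paper's open problems (the first conjecture of Section~\ref{sec:rems}); the paper supplies no proof of it, and explicitly remarks that the method of adding extra parameters is only known to reach moduli $\Phi_n(q)^2$ and $\Phi_n(q)^3$, not $\Phi_n(q)^4$. Your submission is consistent with that assessment, but it is a research plan rather than a proof: neither of the two claimed congruences is actually established. Concretely, for $n\equiv-1\pmod d$ the entire burden of your argument rests on showing that the parametric sum of Theorem~\ref{thm:a-1} (or some redistribution of the powers of $a$ in it) also vanishes modulo $\Phi_n(q)$ for generic $a$, via a termwise pairing $k\leftrightarrow (dn-n-1)/d-k$ in the spirit of Lemma~\ref{lem:2.2}. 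You correctly identify that the factor $[2dk+1]$ and the exponent $q^{d(d-3)k/2}$ break the symmetry needed for such a pairing, but you do not resolve this; you only name it as ``the main obstacle.'' Until that cancellation (or a workable placement of the powers of $a$ that restores it) is exhibited, the $\Phi_n(q)^3$ assertion is unproved. Note also that Lemma~\ref{lem:2.2} as stated is tailored to the case $n\equiv-\tfrac12\pmod d$, where the single factor $(aq;q^d)_k/(q^d/a;q^d)_k$ transforms cleanly under the reflection; in the $n\equiv-1\pmod d$ setting of Theorem~\ref{thm:a-1} the numerator factors come in $d$ different $a$-powers $a^{d-1}q,\dots,a^2q,q$ paired against denominators $a^{d-2}q^d,\dots,aq^d$, and it is not evident that any analogous reflection formula holds termwise; this is precisely the part that would need to be invented.

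For $n\equiv-\tfrac12\pmod d$ the gap is larger still, and you concede as much: gaining the fourth power would require either a pairing modulo $\Phi_n(q)^2$ (no analogue of which is proved or even precisely formulated here) or a second parameter $b$ with all four specializations $a,b\in\{q^{\pm n}\}$ evaluable, and you correctly observe that Andrews' transformation \eqref{andrews} is not known to handle the resulting sums. Your diagnosis of why the paper's machinery stalls at $\Phi_n(q)^3$ is accurate and matches the authors' own remarks in Section~\ref{sec:rems}, but an accurate diagnosis of the obstruction is not a proof of the statement. As it stands, the proposal establishes nothing beyond what Theorems~\ref{thm:main-1}, \ref{thm:a-1} and \ref{thm:a-2} already give.
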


\begin{conjecture}
Let $d\geqslant 5$ be an odd integer and let $n>1$. Then
\begin{equation*}
\sum_{k=0}^{n-1}[2dk-1]\frac{(q^{-1};q^d)_k^d}{(q^d;q^d)_k^d}q^{\frac{d(d-1)k}{2}}
\equiv
\begin{cases} 0\pmod{\Phi_n(q)^3}, &\text{if $n\equiv 1\pmod{d}$,}\\[5pt]
0\pmod{\Phi_n(q)^4}, &\text{if $n\equiv \frac{1}{2}\pmod{d}$.}
\end{cases}
\end{equation*}
\end{conjecture}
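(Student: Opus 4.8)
The plan is to gain, in each case, one further factor of $\Phi_n(q)$ beyond what Theorem~\ref{thm:main-2} already provides, by sharpening the creative-microscoping arguments behind Theorems~\ref{thm:main-2-one} and \ref{thm:main-2-two}. Concretely, I would keep the parametric deformations used there — the numerator parameters $a^{d-2}q^{-1},a^{d-4}q^{-1},\dots,aq^{-1}$ together with their reciprocals, the plain factor $(q^{-1};q^d)_k$, and the matching denominators — and try to show that the resulting $a$-series vanishes modulo $\Phi_n(q)(1-aq^n)(a-q^n)$ when $n\equiv1\pmod d$, and modulo $\Phi_n(q)^2(1-aq^n)(a-q^n)$ when $n\equiv\tfrac12\pmod d$. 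Since $(1-aq^n)(a-q^n)$ specializes at $a=1$ to $(1-q^n)^2$, which carries $\Phi_n(q)^2$, and the relevant denominators remain coprime to $\Phi_n(q)$ in the limit, letting $a\to1$ would then yield $\Phi_n(q)^3$, respectively $\Phi_n(q)^4$. As in the proof of Theorem~\ref{thm:main-2-two}, $\Phi_n(q)$ and $(1-aq^n)(a-q^n)$ are coprime in $\mathbb Z[q,a,a^{-1}]$, so the $\Phi_n(q)$-power factor and the $(1-aq^n)(a-q^n)$ factor may be established separately.

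The divisibility by $(1-aq^n)(a-q^n)$ requires nothing beyond what is in the proofs of Theorems~\ref{thm:main-2-one} and \ref{thm:main-2-two}: at $a=q^{\pm n}$ the $a$-series terminates and is evaluated by Andrews' transformation \eqref{andrews} (or, in the simplest instance, by the $q$-binomial identity \eqref{eq:qbino}). The new input for $n\equiv1\pmod d$ is divisibility of the $a$-series by $\Phi_n(q)$ at \emph{generic} $a$; I would obtain this from a pairing $k\leftrightarrow M-k$ in the style of Lemma~\ref{lem:3.2}, with $M=(dn-n+1)/d$: for $k>M$ the plain factor $(q^{-1};q^d)_k$ is by itself divisible by $\Phi_n(q)$, and for $0\le k\le M$ the $k$-th and $(M-k)$-th terms should cancel modulo $\Phi_n(q)$ because $q^{dM}\equiv q\pmod{\Phi_n(q)}$ — and it is precisely this cancellation that forces the ``unnatural'' powers $a^{\pm1},a^{\pm3},\dots$ into the deformation. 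It is also worth recording the bookkeeping identity
\[
[2dk-1]\,q^{\frac{d(d-1)k}{2}}
=\frac{1}{1-q}\Bigl(q^{\frac{d(d-1)k}{2}}-q^{-1}\,q^{\frac{d(d+3)k}{2}}\Bigr),
\]
together with the observation that the two ``unweighted'' truncated sums arising on the right are interchanged by $q\mapsto q^{-1}$; this reduces the conjecture to proving that $\sum_{k=0}^{n-1}(q^{-1};q^d)_k^d/(q^d;q^d)_k^d\,q^{\frac{d(d-1)k}{2}}$ is divisible by $\Phi_n(q)^3$ (resp.\ $\Phi_n(q)^4$), which clarifies the structure without, by itself, reducing the difficulty.

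The main obstacle is the case $n\equiv\tfrac12\pmod d$, where the Lemma~\ref{lem:3.2}-type pairing must be upgraded from ``modulo $\Phi_n(q)$'' to ``modulo $\Phi_n(q)^2$'' at generic $a$: one has to expand the ratio of the $(M-k)$-th term to the $k$-th (stripped of the $[2dk-1]$ factor) through first order in $\Phi_n(q)$ and prove that the linear-in-$\Phi_n(q)$ correction, reweighted by $[2dk-1]$ and the appropriate power of $q$, again telescopes to zero. This is exactly the step that a single extra parameter does not reach — as the authors note, the method tops out at $\Phi_n(q)^3$, and inserting a second parameter so as to work modulo $\Phi_n(q)(1-aq^n)(a-q^n)(1-bq^n)(b-q^n)$ would, after $a,b\to1$, force $\Phi_n(q)^5$, which is false. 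I therefore expect the fourth-power statement to demand a genuinely new ingredient: a transformation or $q$-WZ pair carrying a built-in double zero at $q^n=1$, in the spirit of the existing proofs of the modulo $p^4$ members of Van Hamme's list such as \eqref{eq:p4} (cf.\ \cite{OZ,Swisher}), or a second parameter coupled to $a$ so that only a double rather than a quadruple zero survives the limit. As a preliminary step I would pin down the precise parametric moduli numerically for $d=5$ and small $n$; and since this conjecture and the one preceding it are $q\mapsto q^{-1}$ companions, a working argument for either should carry over to the other after the routine bookkeeping of $q$-powers.
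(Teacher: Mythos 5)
The statement you are addressing is one of the paper's \emph{conjectures} (stated in Section~\ref{sec:rems}), not a theorem: the authors give no proof, explicitly call the two refinements ``rather challenging to prove,'' and remark that the creative microscoping method reaches $\Phi_n(q)^2$ or $\Phi_n(q)^3$ but not $\Phi_n(q)^4$. Your submission is accordingly a research plan rather than a proof, and to your credit you say so for the fourth-power case. But the plan also has a concrete gap in its more optimistic half, the claim that the $\Phi_n(q)^3$ congruence for $n\equiv 1\pmod d$ follows by combining the existing ingredients. You propose to use the single parametrization of Theorem~\ref{thm:main-2-two} (numerator parameters $a^{d-2}q^{-1},\dots,aq^{-1}$ and their reciprocals) for \emph{both} residue classes, obtaining the extra factor of $\Phi_n(q)$ from a Lemma~\ref{lem:3.2}-type pairing and the factor $(1-aq^n)(a-q^n)$ from termination at $a=q^{\pm n}$ plus Andrews' transformation. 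These two requirements are incompatible for $n\equiv 1\pmod d$. The termination used in the paper comes from a numerator factor $(a^{j}q^{-1};q^d)_k$ specializing at $a=q^{\pm n}$ to $(q^{x};q^d)_k$ with $x\leqslant 0$ and $d\mid x$; with the odd powers $j\in\{\pm1,\pm3,\dots,\pm(d-2)\}$ and $n\equiv 1\pmod d$, the condition $d\mid jn\mp1$ forces $j=\pm1$ and then $x=n-1>0$, so \emph{no} factor terminates the sum and the Andrews step does not apply. Conversely, the parametrization of Theorem~\ref{thm:main-2-one} (even powers $a^{d-1},\dots,a^2$ in the numerator against odd powers in the denominator) does terminate for $n\equiv1\pmod d$ but does not admit the reciprocal $k\leftrightarrow M-k$ pairing, since numerator and denominator powers of $a$ no longer match as in Lemma~\ref{lem:3.2}. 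Finding one deformation that supports both arguments simultaneously is precisely the missing idea, and is presumably why even the $\Phi_n(q)^3$ half was left open.

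For the $n\equiv\frac12\pmod d$ case you correctly identify the obstruction --- upgrading the pairing from modulo $\Phi_n(q)$ to modulo $\Phi_n(q)^2$ at generic $a$ --- and you do not claim to overcome it; your suggestion that a genuinely new ingredient (a transformation with a built-in double zero at $q^n=1$, in the spirit of the proofs of \eqref{eq:p4}) is needed is consistent with the authors' own assessment. Two smaller caveats: your assertion that a second parameter would force a (false) $\Phi_n(q)^5$ congruence is a plausible heuristic but is not verified; and the $q\mapsto q^{-1}$ reduction to the unweighted sums, while in the spirit of the reduction of \eqref{eq:guo} in the introduction, should be checked carefully since the exponent bookkeeping there changes $q^{dk}$ to $q^{3dk}$ rather than fixing the sum. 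In summary: no part of the conjecture is proved here, and the portion you present as essentially routine is not.
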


The first author \cite[Theorem 1.1]{Guo-m=d} proved that for
$d\geqslant 3$ and $n\equiv -1\pmod{d}$
\begin{equation}
\sum_{k=0}^{n-1}\frac{(q;q^d)_k^d}{(q^d;q^d)_k^d} q^{dk} \equiv
0 \pmod{\Phi_n(q)^2};  \label{eq:d-1}
\end{equation}
and that for $n,d\geqslant 2$ and $n\equiv 1\pmod{d}$
\begin{equation}
\sum_{k=0}^{n-1}\frac{(q^{-1};q^d)_k^d}{(q^d;q^d)_k^d} q^{dk} \equiv
0 \pmod{\Phi_n(q)^2}.  \label{eq:d-2}
\end{equation}
These two $q$-congruences were originally conjectured by the first author and
Zudilin \cite[Conjectures 5.3 and 5.4]{GuoZu}.
Here we would like to make some similar conjectures on congruences
modulo $\Phi_n(q)^2$.

\begin{conjecture}\label{conj:123}
Let $d\geqslant 3$ and $n>1$ be integers with $n\equiv -1\pmod {d(d+1)/2}$.
Then
\begin{equation*}
\sum_{k=0}^{n-1}\frac{(q,q^2,\ldots,q^d;q^{d(d+1)/2})_k}
{(q^{d(d+1)/2};q^{d(d+1)/2})_k^d}q^{d(d+1)k/2}
\equiv 0\pmod{\Phi_n(q)^2}.
\end{equation*}
In particular, if $p\equiv -1\pmod {d(d+1)/2}$ is a prime and $m=d(d+1)/2$, then
\begin{equation*}
\sum_{k=0}^{p-1}
\frac{(\frac{1}{m})_k (\frac{2}{m})_k \cdots (\frac{d}{m})_k}{k!^d}
\equiv 0\pmod{p^2}.
\end{equation*}
\end{conjecture}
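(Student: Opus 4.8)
\textbf{Proof proposal for Conjecture \ref{conj:123}.}
The plan is to follow the \emph{creative microscoping} strategy used for Theorems~\ref{thm:main-1} and \ref{thm:main-2}, since the sum in question is a truncated $_{d+1}\phi_d$-type series with base $q^m$ where $m=d(d+1)/2$. First I would introduce a parametric version: replace the numerator product $(q,q^2,\dots,q^d;q^m)_k$ by a symmetrized product that degenerates to it as $a\to 1$, and try to prove that the new sum vanishes modulo $(1-aq^n)(a-q^n)$ (and, if lucky, also modulo $\Phi_n(q)$, to get the square automatically). The natural candidate, by analogy with Theorem~\ref{thm:a-1}, is to pair the $j$-th numerator factor $q^j$ with a companion $a^{?}q^{?}$ so that the full product is invariant under $q^n\mapsto a^{-1}$ and $q^n\mapsto a$; the exponents of $a$ must be chosen so that, when $a=q^{\pm n}$, the numerator $q$-shifted factorial $(q^{1-(m-1)n};q^m)_k$ (or the analogous one) truncates the sum early, exactly as in \eqref{eq:a-2}.

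Assuming the right parametrization is found, the next step is to evaluate the truncated sum at $a=q^{-n}$ and $a=q^{n}$ and show it is identically zero. Two routes are available. The first, mirroring the proof of Theorem~\ref{thm:a-1}, rewrites the summand as $(-1)^k q^{m\binom{M-k}{2}}\left[{M\atop k}\right]_{q^m}P(q^{mk})$ for $M=(mn-n-1)/m$ and a polynomial $P$ of degree $<M$, whence \eqref{eq:qbino} (with $q\mapsto q^m$) forces the sum to vanish; the degree bookkeeping — checking that $\deg P\le M-1$ after collecting all the linear-in-$q^{mk}$ factors contributed by the $d$ numerator pairs and the shift $q^{m(m-?)k/2}$ — is the analogue of the computation around \eqref{eq:a-p}. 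The second route, should the summand not factor so cleanly, is to invoke Andrews' transformation \eqref{andrews} with $q\mapsto q^m$, $a=q$, $m$ (the Andrews parameter) equal to $d$ or $(d+1)/2$, and the $b_i,c_i$ chosen among the shifted parameters, so that a factor like $(q^{1+m}/b_mc_m;q^m)_N$ on the right-hand side vanishes for the reason given after \eqref{andrews} (an exponent $\equiv 0\pmod m$, nonpositive, below the truncation).

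Once the parametric congruence modulo $(1-aq^n)(a-q^n)$ is in hand, letting $a\to 1$ gives modulus $\Phi_n(q)^2$ provided the denominator $(q^m;q^m)_k^d$ stays coprime to $\Phi_n(q)$ for $0\le k\le n-1$ — which holds because $\gcd(m,n)=1$ follows from $n\equiv -1\pmod m$, so $q^{mk}\not\equiv 1$ for $1\le k\le n-1$. The particular case for primes then follows by the standard specialization $q\to 1$: the $q$-integer $[m]$ factors cancel, $(q^j;q^m)_k/(q^m;q^m)_k \to (j/m)_k/k!$, and $\Phi_p(q)\to p$, so the mod-$\Phi_p(q)^2$ congruence becomes the mod-$p^2$ supercongruence; one only needs that no spurious denominators appear at $q=1$, which is clear here.

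The main obstacle I anticipate is \emph{finding the correct powers of $a$} to attach to each of the $d$ numerator parameters $q,q^2,\dots,q^d$. Unlike the single-parameter case of \eqref{eq:d-1}, here the base is $q^{d(d+1)/2}$ while the numerator exponents are only $1,\dots,d$, so the pairing that makes the product $a\leftrightarrow a^{-1}$ symmetric \emph{and} produces an early truncation at $a=q^{\pm n}$ is not forced in an obvious way; as the authors remark for Theorems~\ref{thm:main-1}–\ref{thm:main-2}, one may be compelled to insert $a^2$ and higher powers, and getting the exponent of the overall $q$-power shift to match (so that the degree estimate $\deg P\le M-1$ survives, or so that the vanishing factor in \eqref{andrews} materializes) is exactly where the real work lies. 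A secondary difficulty is that, for $d\ge 4$, Andrews' transformation may be genuinely needed (the summand need not collapse to a single $q$-binomial times a low-degree polynomial), and then one must verify the somewhat delicate condition that the prescribed $\{c_1,b_2,c_2,\dots,b_m,c_m\}$ are consistent with the constraint $b_mc_m$ producing the annihilating factor.
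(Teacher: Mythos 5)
This statement is Conjecture~\ref{conj:123}, not a theorem: the paper offers no proof of it, and the authors explicitly concede that they cannot prove it even for $d=3$ beyond the modulus $\Phi_n(q)$ case. So there is nothing in the paper to compare your argument against, and more importantly, what you have written is not a proof but a research plan whose decisive step is left open. You say so yourself: the ``correct powers of $a$'' are never exhibited, the degree bookkeeping analogous to \eqref{eq:a-p} is never carried out, and the choice of $b_i,c_i$ in \eqref{andrews} is never specified or verified. Everything that would actually establish the congruence is deferred to ``assuming the right parametrization is found.'' That is precisely the part that is hard, and it is where the authors themselves are stuck.

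There is also a concrete structural obstruction you should confront rather than gloss over. The creative-microscoping argument of Theorem~\ref{thm:a-1} hinges on a parametrization that is invariant under $a\mapsto a^{-1}$, so that vanishing at $a=q^{-n}$ and at $a=q^{n}$ are a single condition; this is possible there because the numerator $(q;q^d)_k^d$ consists of $d$ \emph{identical} factors, which can be distributed as $a^{d-1}q,\dots,a^2q,a^{1-d}q,\dots,a^{-2}q,q$. In Conjecture~\ref{conj:123} the numerator parameters $q,q^2,\dots,q^d$ are pairwise distinct, so a factor $(a^{e_j}q^j;q^m)_k$ can only be matched under $a\mapsto a^{-1}$ by $(a^{-e_j}q^j;q^m)_k$, which would require a second copy of $q^j$ that is not present (unless $e_j=0$, which destroys the microscoping). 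Hence the symmetric insertion of $a$ that drives Theorems~\ref{thm:a-1} and \ref{thm:a-2} is not available in any obvious form, and one would have to prove vanishing at $a=q^{-n}$ and $a=q^n$ as two genuinely separate identities, or find an entirely different deformation. Until such a parametrization (or another mechanism for the second power of $\Phi_n(q)$) is produced and the resulting identities are verified, the statement remains a conjecture.
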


\begin{conjecture}\label{conj:-1-2-3}
Let $d\geqslant 2$ and $n>1$ be integers with $n\equiv 1\pmod {d(d+1)/2}$.
Then
\begin{equation*}
\sum_{k=0}^{n-1}\frac{(q^{-1},q^{-2},\ldots,q^{-d};q^{d(d+1)/2})_k}
{(q^{d(d+1)/2};q^{d(d+1)/2})_k^d}q^{d(d+1)k/2}
\equiv 0\pmod{\Phi_n(q)^2}.
\end{equation*}
In particular, if $p\equiv 1\pmod {d(d+1)/2}$ is a prime and $m=d(d+1)/2$, then
\begin{equation*}
\sum_{k=0}^{p-1}
\frac{(-\frac{1}{m})_k (-\frac{2}{m})_k \cdots (-\frac{d}{m})_k}{k!^d}
\equiv 0\pmod{p^2}.
\end{equation*}
\end{conjecture}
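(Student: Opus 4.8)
The plan is to imitate the proof of Theorem~\ref{thm:main-4}, which contains the case $d=2$ (where $d(d+1)/2=3$) of this conjecture. Put $m=d(d+1)/2$ and note that $1+2+\cdots+d=m$. First I would look for a closed-form evaluation of the truncated sum, to be proved by induction on $n$, of the shape
\begin{equation*}
\sum_{k=0}^{n-1}\frac{(q^{-1},q^{-2},\ldots,q^{-d};q^{m})_k}{(q^{m};q^{m})_k^d}\,q^{mk}
=\frac{R_d(q^n,q)}{\prod_{j=1}^{d}(1-q^{j})}\cdot
\frac{(q,q^2,\ldots,q^d;q^{m})_{n-1}}{(q^{m};q^{m})_{n-1}^d},
\end{equation*}
where $R_d(x,q)$ is an explicit Laurent polynomial in $x$ with few terms and coefficients in $\Z[q,q^{-1}]$, independent of $n$; for $d=2$ one has $R_2(x,q)=2-q-q^2+(1-q^{-3})x^3$, which is the numerator factor appearing in \eqref{induction-2}. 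The inductive step amounts to checking that the difference of two consecutive partial sums equals the difference of the corresponding right-hand sides, which reduces to a polynomial identity in $q$ and $q^n$.

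Granting such an evaluation, the conjecture follows at once. Since $n>1$ and $n\equiv 1\pmod m$ we have $n>m$ and $\gcd(m,n)=1$, so for each fixed $j\in\{1,\ldots,d\}$ the congruence $im\equiv -j\pmod n$ has a unique solution $i=i_j$ with $0\leqslant i_j\leqslant n-1$, and $i_j\ne n-1$ because $i_j=n-1$ would force $n\mid m-j$, whereas $0<m-j<n$. Hence $(q^{j};q^{m})_{n-1}$ carries exactly one factor divisible by $\Phi_n(q)$, namely $1-q^{j+i_jm}$; since the exponents $j+i_jm$ for $1\leqslant j\leqslant d$ are pairwise distinct, $(q,q^2,\ldots,q^d;q^{m})_{n-1}$ is divisible by $\Phi_n(q)^d$, in particular by $\Phi_n(q)^2$ as $d\geqslant 2$. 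The prefactor is harmless: $(q^{m};q^{m})_{n-1}$ is coprime to $\Phi_n(q)$ because $n\nmid mi$ for $1\leqslant i\leqslant n-1$, and $\prod_{j=1}^{d}(1-q^{j})$ is coprime to $\Phi_n(q)$ because $1\leqslant j\leqslant d<n$. The ``in particular'' assertion then follows from the $q$-congruence in the standard way, by letting $q\to1$ and using $\Phi_p(1)=p$ together with $p\nmid m$ (valid since $p\equiv 1\pmod m$), each summand tending to $(-1/m)_k\cdots(-d/m)_k/k!^d$.

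The crux is establishing the closed form for general $d$: it is not clear \emph{a priori} that a uniform evaluation with a short correction factor $R_d$ exists, and both guessing $R_d$ and organizing the induction may be intricate. Should no uniform closed form be available, I would instead attempt \emph{creative microscoping}: introduce an extra parameter $a$, conjecture a parametric refinement congruent to $0$ modulo $(1-aq^n)(a-q^n)$ whose denominators stay coprime to $\Phi_n(q)$ as $a\to1$, establish it by specializing $a=q^{n}$ and $a=q^{-n}$ (where the sum should collapse, the natural instrument being Andrews' multiseries generalization of the Watson transformation \eqref{andrews}), and finally let $a\to1$. The delicate point is that the numerator parameters $q^{-1},\ldots,q^{-d}$ carry pairwise distinct exponents, so, unlike in \eqref{eq:d-1}--\eqref{eq:d-2}, the summand is not manifestly very-well-poised; one would have to insert the powers of $a$ artfully --- as was already needed for Theorems~\ref{thm:a-1} and \ref{thm:main-2-one} --- to create the symmetry that \eqref{andrews} requires, and the possibility that no such insertion works is presumably why the statement remains conjectural.
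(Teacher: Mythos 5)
You should first note that the statement you were asked to prove is presented in the paper as an open problem: it is Conjecture~\ref{conj:-1-2-3}, and the authors explicitly concede that they cannot prove it (or Conjecture~\ref{conj:123}) even for $d=3$, being able to handle only the weaker modulus $\Phi_n(q)$; the sole settled case is $d=2$, which follows from Theorem~\ref{thm:main-4} via the identity \eqref{induction-2}. So there is no proof in the paper to compare against, and your proposal does not supply one either. Your reduction is sound as far as it goes: the divisibility bookkeeping is correct (for $n\equiv 1\pmod m$ one has $\gcd(m,n)=1$, each $(q^j;q^m)_{n-1}$ with $1\leqslant j\leqslant d$ picks up exactly one factor $1-q^{j+i_jm}$ with $n\mid j+i_jm$ and $i_j\leqslant n-2$, the exponents are distinct, and the denominators stay coprime to $\Phi_n(q)$), your identification of $R_2$ from \eqref{induction-2} is accurate, and the passage to the $p$-adic corollary by $q\to1$ is routine. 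But the entire weight of the argument rests on the hypothesized closed-form evaluation with a short correction factor $R_d$, and for $d\geqslant 3$ no such evaluation is known; the truncated sum is then a genuinely higher-order basic hypergeometric series with no reason to telescope. Your fallback via creative microscoping faces exactly the obstruction you name: the numerator parameters $q^{-1},\ldots,q^{-d}$ break the well-poised symmetry that \eqref{andrews} exploits in Theorems~\ref{thm:a-1} and \ref{thm:main-2-one}, and no insertion of powers of $a$ achieving the required structure is known. In short, you have correctly isolated where the difficulty lies, but the crux you defer is precisely the open problem, so the proposal is a program rather than a proof.

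One smaller point: it would strengthen your write-up to observe, as the paper does, that the $d=1$ analogue of this family is covered by \eqref{eq:d-2}, and that the conjecture sits in a hierarchy (Conjectures~\ref{conj:last-1}--\ref{conj:last-4}) where even the simplest new cases resist both the closed-form and the parametric approaches; this is evidence that a genuinely new idea, not a refinement of either of your two strategies, is likely needed.
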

We should concede that we are not able to prove Conjectures~\ref{conj:123}
and \ref{conj:-1-2-3} even for $d=3$ (we are only capable to deal with
the modulus $\Phi_n(q)$ case).
Note that Conjecture~\ref{conj:-1-2-3} is true for $d=2$ by Theorem~\ref{thm:main-4}.

\begin{conjecture}\label{conj:135}
Let $d\geqslant 3$ and $n>1$ be integers with $n\equiv -1\pmod {d^2}$. Then
\begin{equation*}
\sum_{k=0}^{n-1}\frac{(q,q^3,\ldots,q^{2d-1};q^{d^2})_k}
{(q^{d^2};q^{d^2})_k^d}q^{d^2 k} \equiv 0\pmod{\Phi_n(q)^2}.
\end{equation*}
In particular, if $p\equiv -1\pmod {d^2}$ is a prime, then
\begin{equation*}
\sum_{k=0}^{p-1}
\frac{(\frac{1}{d^2})_k (\frac{3}{d^2})_k \cdots (\frac{2d-1}{d^2})_k}{k!^d}
\equiv 0\pmod{p^2}.
\end{equation*}
\end{conjecture}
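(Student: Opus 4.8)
The plan is to attack this via the creative microscoping method of the first author and Zudilin \cite{GuoZu}, exactly as in Sections~\ref{sec:prth1} and \ref{sec:prth2}: introduce an auxiliary parameter $a$, establish a $q$-congruence for the resulting parametric sum modulo $(1-aq^n)(a-q^n)$, and then let $a\to1$. Since $\gcd(d^2,n)=1$, the denominators of the parametric sum stay coprime to $\Phi_n(q)$, while $(1-aq^n)(a-q^n)$ becomes $(1-q^n)^2$ at $a=1$, which is divisible by $\Phi_n(q)^2$; so such a parametric congruence would immediately yield the conjecture, just as \eqref{eq:a-1} yields the $n\equiv-1\pmod d$ case of Theorem~\ref{thm:main-1} and as the parametric companion of \eqref{eq:d-1} proved in \cite{Guo-m=d} yields \eqref{eq:d-1}.

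The first, and already delicate, step is to guess the correct parametric generalization. The summand has $d$ numerator factors $(q,q^3,\dots,q^{2d-1};q^{d^2})_k$; the idea is to group them into reciprocal pairs $(a^{c}q^{r},a^{-c}q^{s};q^{d^2})_k$, to insert compensating factors $(a^{c}q^{d^2},a^{-c}q^{d^2};q^{d^2})_k$ in the denominator, and (when $d$ is odd) to leave one numerator factor unparametrized, exactly as $(q;q^d)_k$ is left alone in Theorem~\ref{thm:a-1} and $q^{d-2r}$ in \eqref{eq:more-1}. As already happens in Theorems~\ref{thm:main-1} and \ref{thm:main-2}, we expect powers of $a$ beyond $a^{\pm1}$ to be needed. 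Determining the exponents $c$ and the correct pairing of the residues $1,3,\dots,2d-1$, so that the two specializations $a=q^{\pm n}$ both force the sum to terminate early and so that the limit $a\to1$ returns precisely the conjectural sum, is the first real task: the relations $n\equiv-1\pmod{d^2}$ and $q^n\equiv1\pmod{\Phi_n(q)}$ must conspire to annihilate the right residues.

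Granting the correct parametric claim, we would follow the proofs of Theorems~\ref{thm:a-1} and \ref{thm:a-2}: modulo $1-aq^n$ and $1-a^{-1}q^n$, set $a=q^{\pm n}$, observe that the sum then terminates at $k=(d^2n-n-1)/d^2$, and reduce to showing that a single terminating basic hypergeometric series vanishes. This is where we expect the main obstacle. In Theorem~\ref{thm:a-1} the terminating series collapses via the $q$-binomial theorem in the form \eqref{eq:qbino}, and in Theorem~\ref{thm:a-2} a vanishing instance of Andrews' transformation \eqref{andrews} does it; but the series produced here by the residues $1,3,\dots,2d-1$ with base $q^{d^2}$ is not very-well-poised, so \eqref{andrews} does not apply directly, and no closed-form evaluation of the kind used for Theorem~\ref{thm:main-4} or in Section~\ref{sec:more} seems available. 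Breaking this deadlock would likely require a new multiseries transformation tailored to this configuration, or an altogether different route, such as an explicit product formula for the truncated sum proved by induction on $n$ in the spirit of \eqref{induction-2}. We have not found such a formula, which is consistent with the obstructions already recorded for Conjectures~\ref{conj:123} and \ref{conj:-1-2-3}, and is why the statement is offered only as a conjecture.
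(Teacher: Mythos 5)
The statement you were asked to prove is Conjecture~\ref{conj:135}, which the paper itself leaves open: no proof of it appears anywhere in the text, and the authors explicitly concede, for the closely related Conjectures~\ref{conj:123} and \ref{conj:-1-2-3}, that they cannot get beyond the modulus $\Phi_n(q)$ case even for $d=3$. Your proposal is likewise not a proof, and you say so yourself in the final paragraph; so there is no argument here to check against a paper proof that does not exist.

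That said, your analysis of the situation is accurate and consistent with the paper's own position. The template you describe --- insert a parameter $a$, prove a congruence modulo $(1-aq^n)(a-q^n)$, and let $a\to1$ --- is precisely the method used for Theorems~\ref{thm:main-1} and \ref{thm:main-2} and for the results of Section~\ref{sec:more}, and the two obstacles you identify are the genuine ones: first, finding a parametrization of the staggered residues $1,3,\dots,2d-1$ (with base $q^{d^2}$) under which both specializations $a=q^{\pm n}$ truncate the sum; and second, disposing of the resulting terminating series, which is not very-well-poised (so Andrews' transformation \eqref{andrews} is not directly applicable) and for which no analogue of the $q$-binomial-theorem degree count of Theorem~\ref{thm:a-1} or of the closed-form identities of Sections~\ref{sec:prth3}--\ref{sec:prth4} is known. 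The only fair verdict is that you have correctly recognized the statement as an open conjecture and explained why the paper's existing toolkit does not settle it; you have not, and could not have been expected to, supply the missing proof.
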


\begin{conjecture}\label{conj:-1-3-5}
Let $d\geqslant 2$ and $n>1$ be integers with $n\equiv 1\pmod {d^2}$. Then
\begin{equation*}
\sum_{k=0}^{n-1}\frac{(q^{-1},q^{-3},\ldots,q^{-2d+1};q^{d^2})_k}
{(q^{d^2};q^{d^2})_k^d}q^{d^2k} \equiv 0\pmod{\Phi_n(q)^2}.
\end{equation*}
In particular, if $p\equiv 1\pmod {d^2}$ is a prime, then
\begin{equation*}
\sum_{k=0}^{p-1}
\frac{(-\frac{1}{d^2})_k (-\frac{3}{d^2})_k \cdots (-\frac{2d-1}{d^2})_k}{k!^d}
\equiv 0\pmod{p^2}.
\end{equation*}
\end{conjecture}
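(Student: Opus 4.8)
The plan is to prove Conjecture~\ref{conj:-1-3-5} by the \emph{creative microscoping} method, in the same way that Theorem~\ref{thm:main-1} was obtained from Theorem~\ref{thm:a-1} and that the two theorems of Section~\ref{sec:more} were proved: introduce an extra parameter $a$, establish a parametric refinement of the $q$-congruence modulo $(1-aq^n)(a-q^n)$, and then let $a\to1$. Since $n\equiv1\pmod{d^2}$ we have $\gcd(d^2,n)=1$, so no denominator $(q^{d^2};q^{d^2})_k$ with $0\leqslant k\leqslant n-1$ is divisible by $\Phi_n(q)$, while as $a\to1$ the factor $(1-aq^n)(a-q^n)$ tends to $(1-q^n)^2$, which is divisible by $\Phi_n(q)^2$ because $n>1$. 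Hence a parametric congruence modulo $(1-aq^n)(a-q^n)$ specialises at once to the congruence modulo $\Phi_n(q)^2$, and then letting $q\to1$ with $n=p$ (using $\Phi_p(1)=p$ and $(q^{-(2j-1)};q^{d^2})_k/(q^{d^2};q^{d^2})_k\to(-(2j-1)/d^2)_k/k!$, valid $p$-adically since $p\equiv1\pmod{d^2}$ forces $p\nmid d^2\,k!$ for $k\leqslant p-1$) yields the supercongruence modulo $p^2$.

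The first step is to write down the correct $a$-deformation. Guided by the shape of Theorem~\ref{thm:a-1} and of the parametric generalisations in Section~\ref{sec:more}, the candidate is obtained by pairing the numerator parameters $q^{-1},q^{-3},\dots,q^{-(2d-1)}$ into the pairs $\{q^{-(2j-1)},q^{-(2(d-j)+1)}\}$ of product $q^{-2d}$, replacing the pair indexed by $j$ by $a^{e_j}q^{-(2j-1)}$ and $a^{-e_j}q^{-(2(d-j)+1)}$ for suitably chosen exponents $e_j$ (leaving the central parameter $q^{-d}$ untouched when $d$ is odd), and deforming the $d$ denominator factors $q^{d^2}$ by matched powers $a^{\pm f_i}$ so that each product $a^{f_i}q^{d^2}\cdot a^{-f_i}q^{d^2}$ is unchanged (with one or two factors left undeformed to keep the count right). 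As the Introduction warns, one should be ready to use powers of $a$ beyond $a^{\pm1}$ here. After checking the routine fact that for $a=q^{\pm n}$ none of the deformed denominators vanishes (again using $\gcd(d^2,n)=1$ and $k\leqslant n-1$), proving the parametric congruence reduces to showing that the $a$-deformed sum vanishes identically at $a=q^{n}$ and at $a=q^{-n}$; at each of those two values one numerator $q$-factor acquires a sufficiently negative exponent and becomes $0$ once $k$ exceeds an explicit bound below $n$, so the sum terminates.

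The heart of the matter is then the evaluation of that terminating sum, and I would mimic the argument of Theorem~\ref{thm:a-1}. Using identities of the type \eqref{qdk-0} to express each ratio of $q$-shifted factorials as a polynomial in $q^{d^2k}$, together with the symmetry $d^2\binom{M-k}2=d^2\binom k2+(\text{linear in }k)$, one rewrites the terminating sum in the form $\sum_{k=0}^{M}(-1)^k q^{d^2\binom{M-k}2}\begin{bmatrix}M\\k\end{bmatrix}_{q^{d^2}}P(q^{d^2k})$, with $M$ of size about $n/d^2$ and $P$ a polynomial in $q^{d^2k}$; the decisive point is the degree estimate $\deg_{q^{d^2k}}P\leqslant M-1$, after which the vanishing identity \eqref{eq:qbino} (a form of the $q$-binomial theorem) forces the sum to be $0$. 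Should this degree count just fail for some $d$, the fallback is to feed the terminating sum, after the substitution $q\mapsto q^{d^2}$, into Andrews' multiseries transformation \eqref{andrews} with $m$ of order $d/2$, arranging that the right-hand side carries a vanishing factor $(q^{d^2}/b_mc_m;q^{d^2})_N$, exactly as in the proof of Theorem~\ref{thm:a-2}.

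The main obstacle is precisely this terminating evaluation at $a=q^{\pm n}$ for general $d$. Already the correct $a$-deformation is not forced and, as the authors stress, may need several auxiliary powers of $a$; moreover it is not clear that for every $d$ the resulting polynomial $P$ has degree $<M$, nor that the sum fits Andrews' transformation for every $d$. For small $d$ (say $d=2,3,4$) one of the two routes above should go through after a finite computation, but a uniform proof for all $d$ may well require either a new multiseries transformation or an induction on $d$ in the spirit of the inductions used in Sections~\ref{sec:prth3} and \ref{sec:prth4}.
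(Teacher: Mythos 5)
This statement is an open \emph{conjecture} in the paper, not a theorem: the authors give no proof for general $d$, and for the sibling Conjectures~\ref{conj:123} and \ref{conj:-1-2-3} they explicitly concede that they cannot prove them even for $d=3$ beyond the modulus $\Phi_n(q)$. Your proposal is therefore best judged as a research plan, and as such it contains a genuine, acknowledged gap at exactly the decisive point. The framing steps you describe (inserting $a$, reducing to $a=q^{\pm n}$, passing to the limit $a\to1$, and the $p$-adic specialisation $q\to1$) are all routine and correct, but they carry no content until one (i) exhibits a specific $a$-deformation whose denominators avoid $\Phi_n(q)$ and whose specialisations at $a=q^{\pm n}$ terminate, and (ii) actually evaluates the resulting terminating sum to zero. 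You leave both the choice of exponents $e_j,f_i$ and the degree estimate $\deg_{q^{d^2k}}P\leqslant M-1$ unverified, and you yourself flag that the Andrews-transformation fallback may not fit. Since the whole difficulty of the conjecture is concentrated in precisely these two points, the proposal does not constitute a proof, and there is good reason to believe the microscoping route genuinely fails here: the numerator parameters $q^{-1},q^{-3},\dots,q^{-2d+1}$ do not pair up into products of the fixed value $q^{1-d^2}$ or $q^{-d^2-1}$ needed to make the deformed sum a well-poised series of the type \eqref{andrews} accepts, which is presumably why the authors could not push their own method through.

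What the paper actually proves is only the $d=2$ case, and by a completely different and more elementary route than the one you outline: the explicit closed-form evaluation
\begin{equation*}
\sum_{k=0}^{n-1}\frac{(q^{-1},q^{-3};q^4)_k}{(q^4;q^4)_k^2} q^{4k}
=\frac{(2+q^{4n}-q-q^3-q^{4n-4})(q,q^{3};q^4)_{n-1}}
{(1-q)(1-q^3)\,(q^4;q^4)_{n-1}^2},
\end{equation*}
proved by induction on $n$ exactly as in Section~\ref{sec:prth4}, after which one reads off that $(q,q^3;q^4)_{n-1}$ contributes the factors $1-q^{n}$ and $1-q^{3n}$, hence $\Phi_n(q)^2$, while the denominator is prime to $\Phi_n(q)$. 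If you want to salvage something provable from your plan, this induction-on-$n$ route for $d=2$ is the one to take; for $d\geqslant 3$ the statement should be presented as conjectural.
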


Using the following identity
\begin{equation*}
\sum_{k=0}^{n-1}\frac{(q^{-1},q^{-3};q^4)_k}{(q^4;q^4)_k^2} q^{4k}
=\frac{(2+q^{4n}-q-q^3-q^{4n-4})(q,q^{3};q^4)_{n-1} }
{(1-q)(1-q^3) (q^4;q^4)_{n-1}^2},
\end{equation*}
we can easily prove that Conjecture \ref{conj:-1-3-5} is true for $d=2$.

It seems that Conjectures \ref{conj:123} and \ref{conj:-1-2-3}
can be further generalized as follows.
\begin{conjecture}\label{conj:last-1}
Let $d$ and $r$ be positive integers with $dr\geqslant 3$.
Let $n>1$ be an integer with $n\equiv -1\pmod {d(d+1)r/2}$. Then
\begin{equation*}
\sum_{k=0}^{n-1}\frac{(q,q^2,\ldots,q^d;q^{d(d+1)r/2})_k^r}
{(q^{d(d+1)r/2};q^{d(d+1)r/2})_k^{dr}}q^{d(d+1)rk/2}
\equiv 0\pmod{\Phi_n(q)^2}.
\end{equation*}
\end{conjecture}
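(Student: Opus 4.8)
The plan is to attack Conjecture~\ref{conj:last-1} by the creative microscoping method of Guo and Zudilin that underlies Theorems~\ref{thm:main-1} and \ref{thm:main-2}. Write $m=d(d+1)r/2$; since $n\equiv-1\pmod m$ we have $\gcd(m,n)=1$, so none of $m,2m,\dots,(n-1)m$ is divisible by $n$ and each $q$-shifted factorial $(q^m;q^m)_k$ with $k\le n-1$ is coprime to $\Phi_n(q)$. I would first look for a parametric generalization of the displayed sum in which a parameter $a$ is inserted into the numerator, with matching changes in the denominator, so that the resulting sum vanishes modulo $(1-aq^n)(a-q^n)$; the insertions should be arranged so that replacing $a$ by $1/a$ merely permutes the factors (hence vanishing at $a=q^{n}$ is equivalent to vanishing at $a=q^{-n}$), and so that the limiting denominators as $a\to1$ stay coprime to $\Phi_n(q)$. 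Granting such a parametric congruence, letting $a\to1$ turns $(1-aq^n)(a-q^n)$ into $(1-q^n)^2$, which is divisible by $\Phi_n(q)^2$, and this gives the claim exactly as Theorem~\ref{thm:a-1} gives the first case of Theorem~\ref{thm:main-1}.

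The substance is to choose the right insertion and prove the identity it produces. A balanced scheme is to split the $r$ copies of $(q,q^2,\dots,q^d;q^m)_k$ into pairs and, within each pair, replace $(q^j;q^m)_k$ by $(a^{\varepsilon_j}q^j;q^m)_k$ and $(a^{-\varepsilon_j}q^j;q^m)_k$ for suitable exponents $\varepsilon_j$, leaving one unpaired $(q^j;q^m)_k$ per $j$ if $r$ is odd, and correspondingly decorating the $dr$ copies of $(q^m;q^m)_k$ in the denominator (and possibly the power $q^{mk}$) by matching powers of $a$. The exponents must be chosen so that, on setting $a=q^{-n}$ (equivalently $a=q^n$), the truncated parametric sum is either (i) of the shape $\sum_k(-1)^kq^{m\binom{N-k}{2}}\begin{bmatrix}N\\k\end{bmatrix}_{q^m}P(q^{mk})$ with $N=(mn-n-1)/m$ and $\deg P\le N-1$, so that it vanishes by \eqref{eq:qbino} as in the proof of Theorem~\ref{thm:a-1}; or (ii) a special case of Andrews' multiseries transformation \eqref{andrews} whose right-hand side contains a vanishing factor $(q^{\bullet};q^{m})_N=0$, as in the proof of Theorem~\ref{thm:a-2}. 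For general $r$ I expect route (ii), with \eqref{andrews} applied once for each pair or iterated $\lfloor r/2\rfloor$ times (alternatively, an induction on $r$), to be the natural mechanism, since Andrews' transformation already carries $2m$ free parameters.

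The hard part will be precisely the choice in step (i)/(ii): already for $d=3,\ r=1$ — the case flagged after Conjecture~\ref{conj:-1-2-3} — the rigid one-parameter insertion that works for \eqref{eq:d-1} produces a polynomial $P$ of degree larger than $N-1$, and the naive specialization of \eqref{andrews} fails to create the needed zero. Getting around this appears to require, as in Theorems~\ref{thm:main-1} and \ref{thm:main-2}, inserting not only $a$ but also powers $a^2,a^3,\dots$ in several places, and it is not obvious which combination simultaneously keeps the denominator coprime to $\Phi_n(q)$, respects the $a\leftrightarrow1/a$ symmetry, and lands the sum in a summable or transformable configuration. A further difficulty is uniformity: the number of parameters grows with $dr$, so any successful argument must be organized to run for all admissible $(d,r)$ at once. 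If creative microscoping turns out to be too inflexible here, a fallback is to first settle the modulus $\Phi_n(q)$ case by the cancellation-in-pairs trick (the analogue of Lemma~\ref{lem:2.2}) and then look for a closed-form evaluation of the truncated sum, proved by induction on $n$ in the spirit of Theorems~\ref{thm:main-3} and \ref{thm:main-4}, whose numerator is manifestly divisible by $\Phi_n(q)^2$.
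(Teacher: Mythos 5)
You are addressing Conjecture~\ref{conj:last-1}, which the paper poses as an \emph{open problem}: there is no proof of it anywhere in the paper. Indeed, Conjecture~\ref{conj:123} is exactly the $r=1$ instance of this statement, and the authors explicitly concede that they cannot prove Conjecture~\ref{conj:123} even for $d=3$ beyond the modulus $\Phi_n(q)$; the only cases of Conjecture~\ref{conj:last-1} they can settle are the degenerate ones with $d=1$, which reduce to the known congruence \eqref{eq:d-1} with $d$ replaced by $r$. So the real question is whether your proposal closes this gap, and it does not: it is a research plan rather than a proof, and you yourself flag the decisive step as unresolved. Creative microscoping only delivers a congruence modulo $\Phi_n(q)^2$ once one has exhibited a \emph{concrete} parametric deformation of the sum that keeps the denominators coprime to $\Phi_n(q)$ as $a\to 1$ and provably vanishes at $a=q^{\pm n}$, either through the $q$-binomial mechanism \eqref{eq:qbino} (which requires verifying the degree bound $\deg P\leqslant N-1$) or through a vanishing factor on the right-hand side of Andrews' transformation \eqref{andrews}. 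You describe a family of candidate insertions (pairing the $r$ copies and decorating with $a^{\pm\varepsilon_j}$) but never specify the exponents, never check the degree bound, and never identify a specialization of \eqref{andrews} that produces the required zero. Your own observation that already for $d=3$, $r=1$ the naive specialization fails is precisely the obstruction the authors ran into, and nothing in the proposal overcomes it.

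There is also a structural reason to doubt that route (ii) can work as stated. In Theorems~\ref{thm:a-1} and \ref{thm:a-2} every numerator entry carries the same $q$-shift (all are of the form $a^{j}q$ or $a^{j}q^{-1}$) and pairs with a denominator entry so that the series is very-well-poised with respect to the base $q^d$, which is what makes \eqref{andrews} applicable. In Conjecture~\ref{conj:last-1} the numerator parameters are $q,q^2,\ldots,q^d$ (each repeated $r$ times) against denominators all equal to $q^{m}$ with $m=d(d+1)r/2$; for $d\geqslant 2$ no choice of the special parameter makes this configuration well-poised, so \eqref{andrews} does not apply directly and some genuinely new transformation or identity would be needed. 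The fallback you mention --- proving the modulus-$\Phi_n(q)$ case by pairwise cancellation and then hunting for a closed form in the spirit of \eqref{eq:inducton} and \eqref{induction-2} --- is exactly as far as the authors themselves got (the first half only), and no such closed form is known. The statement therefore remains a conjecture, and your proposal, while sensibly oriented, does not constitute a proof.
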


\begin{conjecture}\label{conj:last-2}
Let $d$ and $r$ be positive integers with $dr\geqslant 2$.
Let $n>1$ be an integer with $n\equiv 1\pmod {d(d+1)r/2}$. Then
\begin{equation*}
\sum_{k=0}^{n-1}\frac{(q^{-1},q^{-2},\ldots,q^{-d};q^{d(d+1)r/2})_k^r}
{(q^{d(d+1)r/2};q^{d(d+1)r/2})_k^{dr}}q^{d(d+1)rk/2}
\equiv 0\pmod{\Phi_n(q)^2}.
\end{equation*}
\end{conjecture}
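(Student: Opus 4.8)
A natural line of attack on Conjecture~\ref{conj:last-2} is the creative microscoping method used throughout this paper; note that for $r=1$ it specialises to Conjecture~\ref{conj:-1-2-3}. Write $m=d(d+1)r/2$; the hypothesis $n\equiv1\pmod m$ forces $\gcd(m,n)=1$, so we may set $n-1=cm$ with $c\geqslant1$. As in the proof of Theorem~\ref{thm:main-4} and of the results in Section~\ref{sec:more}, one checks that for each $j$ with $1\leqslant j\leqslant d$ the factor $1-q^{(n-cj)m-j}=1-q^{n(m-j)}$ occurs in $(q^{-j};q^m)_k$ as soon as $k\geqslant n-cj+1$, and since $0<m-j<n$ this factor is divisible by $\Phi_n(q)$. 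Hence the summand is divisible by $\Phi_n(q)^{dr}$ once $k>n-c$, and the entire difficulty lies with the low-order terms, those with $k\leqslant n-c$. The plan is to produce a parametric refinement that is divisible by $(1-aq^n)(a-q^n)$ and then to let $a\to1$, which yields divisibility by $\Phi_n(q)^2$.

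For the parametrization I would follow the template of \eqref{eq:more-1} and of the parametric versions of Theorems~\ref{thm:a-1} and \ref{thm:main-2-one}: leave one of the $dr$ numerator factors $q^{-j}$ unperturbed and replace the remaining ones by $a^{\varepsilon}q^{-j}$ with integer exponents $\varepsilon$ summing to zero, so that $a\to1$ recovers the sum in the statement, while making matching substitutions $q^{m}\mapsto a^{\varepsilon'}q^{m}$ in the $dr$ denominator factors. The purpose of such a choice is that at $a=q^{n}$ and at $a=q^{-n}$ some numerator factor should vanish for every $k$ beyond a bound not exceeding $n-c$, truncating the sum; one then has to prove that the resulting truncated sum is identically zero.

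That vanishing is the step I expect to be the genuine obstacle. For $dr\leqslant2$ it is known: $d=2$, $r=1$ is Theorem~\ref{thm:main-4}, where the sum even admits the closed product evaluation \eqref{induction-2}, and $d=1$, $r=2$ follows from \eqref{eq:inducton}. For larger $d$ or $r$, however, the truncated parametric sum is a basic hypergeometric series whose numerator is the $r$-th power of a product of $d$ $q$-shifted factorials — so it carries $dr$ upper parameters — with argument governed by the power $q^{mk}=q^{d(d+1)rk/2}$; it does not fit Andrews' multiseries transformation \eqref{andrews} (the decisive tool of Sections~\ref{sec:prth1} and \ref{sec:prth2}), nor, for $dr\geqslant3$, does it reduce after truncation to a classically summable terminating series as in the small cases of Section~\ref{sec:more}: the upper parameters do not organise into the paired shape $(b_i,c_i)$, the balancing relations required fail once $dr>2$, and there is no $[2dk\pm1]$ factor to supply a Watson-type kernel. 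One would therefore need either a new summation or transformation tailored to this $\Phi_n(q)^2$ setting, or a polynomial/$q$-binomial argument in the spirit of \eqref{eq:qbino} — neither of which is presently available. This is exactly the difficulty already signalled by the authors for the $d=3$ case of the simpler Conjectures~\ref{conj:123} and \ref{conj:-1-2-3}, which is precisely the case $d=3$, $r=1$ here.

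A complementary approach worth attempting first is induction on $n$: seek a rational function $F_n(q)$ with $F_{n+1}(q)-F_n(q)$ equal to the $n$-th summand, which would at once give divisibility in the shape of \eqref{eq:d-2}. Both \eqref{induction-2} and the identity preceding the $d=2$ case of Conjecture~\ref{conj:-1-3-5} were discovered in this manner. The obstruction is that for $dr\geqslant3$ the partial sums do not appear to be $q$-hypergeometric in $n$, so no such closed form emerges, and one is driven back to the creative microscoping route whose missing ingredient — an Andrews-type evaluation covering $r$-fold products — remains the hard part.
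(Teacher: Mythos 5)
The statement you were asked about is presented in the paper as a conjecture, and the paper contains no proof of it: the authors only observe that it holds for $d=1$ (via the $d\mapsto r$ case of \eqref{eq:d-2}) and, through Conjecture~\ref{conj:-1-2-3}, that the $r=1$, $d=2$ case is Theorem~\ref{thm:main-4}; they explicitly concede being unable to prove even the $d=3$, $r=1$ case beyond the modulus $\Phi_n(q)$. Your proposal correctly refrains from claiming a proof and instead surveys the natural attacks and their obstructions, which is the accurate state of affairs. Your preliminary computation is sound: writing $m=d(d+1)r/2$ and $n=cm+1$, the factor $1-q^{n(m-j)}$ indeed first enters $(q^{-j};q^m)_k$ at $k=n-cj+1$, so the tail $k>n-c$ of the sum is divisible by a high power of $\Phi_n(q)$ and the entire difficulty sits in the initial terms; and your diagnosis that Andrews' transformation \eqref{andrews} cannot absorb $dr$ unpaired upper parameters in the absence of a very-well-poised $[2dk\pm1]$ kernel correctly explains why the method of Sections~\ref{sec:prth1} and \ref{sec:prth2} does not extend. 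Two small points: the full $d=1$ case (all $r\geqslant 2$, not only $r=2$) already follows from \eqref{eq:d-2}, as the authors note at the end of Section~\ref{sec:rems}; and since the paper offers no argument for Conjecture~\ref{conj:last-2}, there is no proof to compare yours against --- the statement remains open, and your assessment of where the missing ingredient lies (a summation or transformation accommodating $r$-fold products of $q$-shifted factorials, or a polynomial argument in the spirit of \eqref{eq:qbino}) is consistent with the authors' own remarks.
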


Likewise, Conjectures \ref{conj:135} and \ref{conj:-1-3-5}
can be further generalized as follows.
\begin{conjecture}\label{conj:last-3}
Let $d$ and $r$ be positive integers with $dr\geqslant 3$.
Let $n>1$ be an integer with $n\equiv -1\pmod {d^2r}$. Then
\begin{equation*}
\sum_{k=0}^{n-1}\frac{(q,q^3,\ldots,q^{2d-1};q^{d^2r})_k^r}
{(q^{d^2r};q^{d^2r})_k^{dr}}q^{d^2r k}
\equiv 0\pmod{\Phi_n(q)^2}.
\end{equation*}
\end{conjecture}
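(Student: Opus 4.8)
The natural approach is \emph{creative microscoping}, exactly as in the proofs of Theorems~\ref{thm:main-1} and~\ref{thm:main-2} and of the results of Section~\ref{sec:more}. Write $m=d^2r$ and let $S_n$ denote the sum on the left-hand side of the asserted congruence. The plan is to introduce a parametric refinement $S_n(a)$, obtained by attaching suitable powers of a new indeterminate $a$ to the $dr$ numerator $q$-shifted factorials $(q^{2i-1};q^m)_k$ (with $1\leqslant i\leqslant d$, each appearing $r$ times) and to the $dr$ copies of $(q^m;q^m)_k$ in the denominator, so that $S_n(1)=S_n$, and then to prove
\begin{equation*}
S_n(a)\equiv 0\pmod{(1-aq^n)(a-q^n)}.
\end{equation*}
Since $1-aq^n$ and $a-q^n$ are coprime for $n>1$, and since for $n\equiv -1\pmod m$ the $a\to1$ limits of all the deformed denominator factors are coprime to $\Phi_n(q)$ while $\Phi_n(q)^2\mid(1-q^n)^2$, letting $a\to1$ in this parametric congruence would immediately yield the conjecture.

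The deformation must be chosen to meet two demands. First, the multiset of $a$-exponents occurring in the numerator and the one occurring in the denominator should each be symmetric about $0$; then the summand of $S_n(a)$ is invariant under $a\mapsto a^{-1}$, so $S_n(q^{-n})=S_n(q^n)$ and it suffices to establish the congruence modulo $1-aq^n$, i.e.\ to show $S_n(q^{-n})=0$. Second, mimicking the proof of Theorem~\ref{thm:a-1}, the $a$-exponents should be arranged so that after the substitution $a=q^{-n}$ (recall $-n\equiv 1\pmod m$) every deformed numerator factor $(a^{c}q^{2i-1};q^m)_k$ matches, in its residue class of exponents modulo $m$, one of the deformed denominator factors; concretely the numerator factor that is paired with a copy of $(q^m;q^m)_k$ should carry an $a$-exponent $c$ with $c\equiv 1-2i\pmod m$.

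Granting such a deformation, at $a=q^{-n}$ the series $S_n(q^{-n})$ terminates, because one of its numerator factors becomes $(q^{s};q^m)_k$ with $s\leqslant 0$ and $m\mid s$. Pairing this terminating numerator factor with one copy of $(q^m;q^m)_k$ produces a $q$-binomial coefficient $\begin{bmatrix}M\\k\end{bmatrix}_{q^m}$ (times a power of $q$), where $M$ is one less than the number of surviving terms, and each of the remaining $dr-1$ ratios of a numerator factor by a denominator factor becomes a polynomial in $q^{mk}$ of bounded degree; if the total degree is small enough one may then write
\begin{equation*}
S_n(q^{-n})=\sum_{k=0}^{M}(-1)^{k}q^{m\binom{M-k}{2}}\begin{bmatrix}M\\k\end{bmatrix}_{q^m}P(q^{mk})
\end{equation*}
with $\deg P<M$, and conclude $S_n(q^{-n})=0$ by expanding $P$ into monomials and invoking the $q$-binomial identity \eqref{eq:qbino} with $q$ replaced by $q^m$, exactly as in the proof of Theorem~\ref{thm:a-1}. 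Should the degree bound fail, the alternative is to feed the terminating series into Andrews' transformation \eqref{andrews} in base $q^m$ and to exhibit a vanishing factor $(q^{s};q^m)_N$ (with $s\leqslant 0$, $m\mid s$, $N>-s/m$) on its right-hand side, in the manner of the proof of Theorem~\ref{thm:a-2}.

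The crux --- and the reason the conjecture (together with its specializations, Conjectures~\ref{conj:135} and~\ref{conj:-1-3-5} for $d\geqslant3$) remains open --- is the very first step: unlike the well-poised sums of Theorems~\ref{thm:main-1} and~\ref{thm:main-2}, the present sum carries no $[\,\cdot\,]$-factor and hence no free very-well-poised structure, so it is unclear whether one can assign to the $dr$ numerator slots (which realize only $d$ distinct exponents, each repeated $r$ times) powers of $a$ that are simultaneously symmetric about $0$ and yield a residue-matching denominator deformation which is itself symmetric about $0$. For the base case $d=2$ it seems more promising to bypass microscoping altogether and to look, as in Sections~\ref{sec:prth3} and~\ref{sec:prth4}, for an explicit evaluation of $\sum_{k=0}^{n-1}(q,q^{3};q^{4r})_k^{r}/(q^{4r};q^{4r})_k^{2r}\,q^{4rk}$ whose numerator manifestly contains $(q,q^{3};q^{4r})_{n-1}^{\,r}$ and hence, for $n\equiv-1\pmod{4r}$, the factor $\Phi_n(q)^2$, in analogy with the role of identity \eqref{induction-2} in the proof of Theorem~\ref{thm:main-4}.
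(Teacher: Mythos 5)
The statement you are addressing is Conjecture~\ref{conj:last-3}: the paper offers no proof of it, and the authors explicitly concede that they cannot prove even the simpler Conjectures~\ref{conj:123} and~\ref{conj:-1-2-3} beyond the modulus $\Phi_n(q)$ case, recording only that the $d=1$ instance follows from the known congruence \eqref{eq:d-1} with $d\mapsto r$. Your submission is therefore correctly characterized not as a proof but as a strategy outline, and you say as much yourself: the entire argument is conditional on the existence of a parametric deformation $S_n(a)$ with $S_n(1)=S_n$ that is (i) invariant under $a\mapsto a^{-1}$, (ii) terminating at $a=q^{-n}$, and (iii) such that the resulting finite sum can be killed either by the $q$-binomial identity \eqref{eq:qbino} (as in Theorem~\ref{thm:a-1}) or by a vanishing factor in Andrews' transformation \eqref{andrews} (as in Theorem~\ref{thm:a-2}). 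You give no construction of such a deformation, and you correctly identify why none is apparent: the sum has only $d$ distinct numerator exponents each repeated $r$ times and no very-well-poised $[\,\cdot\,]$-factor, so the combinatorial bookkeeping that makes the assignments of powers of $a$ work in Theorems~\ref{thm:main-1} and~\ref{thm:main-2} has no obvious analogue here. That missing construction is the entire content of the conjecture; everything after it in your outline is routine.

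Two smaller points. First, even granting a deformation satisfying (i)--(iii), your degree count is not carried out: in the proof of Theorem~\ref{thm:a-1} the bound $\deg P\leqslant M-1$ is a genuinely tight computation that depends on the exponent $q^{d(d-3)k/2}$, and there is no a priori reason the analogous bound holds for the exponent $q^{d^2rk}$ here, so the fallback to \eqref{andrews} would likely be forced --- and \eqref{andrews} applies to very-well-poised series, which this sum is not after an arbitrary deformation. Second, your suggested explicit-evaluation route for $d=2$ is plausible by analogy with \eqref{induction-2}, but for $r\geqslant 2$ the summand involves $2r$-fold products and no closed form is exhibited or known to exist; note that the identity the paper actually proves in this spirit settles only the $d=2$, $r=1$ case of the companion Conjecture~\ref{conj:-1-3-5} ($n\equiv 1$), not any case of Conjecture~\ref{conj:last-3}. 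In short: your assessment of the problem is accurate and consistent with the paper, but no part of the conjecture is proved by your proposal.
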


\begin{conjecture}\label{conj:last-4}
Let $d$ and $r$ be positive integers with $dr\geqslant 2$.
Let $n>1$ be an integer with $n\equiv 1\pmod {d^2r}$. Then
\begin{equation*}
\sum_{k=0}^{n-1}\frac{(q^{-1},q^{-3},\ldots,q^{-2d+1};q^{d^2r})_k^r}
{(q^{d^2r};q^{d^2r})_k^{dr}}q^{d^2rk}
\equiv 0\pmod{\Phi_n(q)^2}.
\end{equation*}
\end{conjecture}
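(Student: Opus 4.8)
The natural approach is the creative microscoping method of \cite{GuoZu} employed throughout this paper: introduce an extra parameter $a$, prove a refinement modulo $(1-aq^n)(a-q^n)$, and then let $a\to 1$. Write $m=(n-1)/(d^2r)$, which is a positive integer since $n>1$ and $n\equiv 1\pmod{d^2r}$; note also that $\gcd(n,d^2r)=1$. I would single out two of the $dr$ numerator $q$-shifted factorials (two copies of some $(q^{-2i+1};q^{d^2r})_k$ when $r\geqslant 2$, or two factors with distinct bases when $r=1$, which is possible because $dr\geqslant 2$) and deform them into $(aq^{-2i+1};q^{d^2r})_k$ and $(q^{-2i+1}/a;q^{d^2r})_k$, simultaneously replacing two of the $dr$ factors $(q^{d^2r};q^{d^2r})_k$ in the denominator by $(aq^{d^2r};q^{d^2r})_k$ and $(q^{d^2r}/a;q^{d^2r})_k$. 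The proposed parametric statement is that the resulting sum is $\equiv 0\pmod{(1-aq^n)(a-q^n)}$; since this deformed summand reduces to the original one at $a=1$, proving it suffices.

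To establish the parametric congruence one checks it separately at the two zeros $a=q^{n}$ and $a=q^{-n}$ of $(1-aq^n)(a-q^n)$. At $a=q^{-n}$ the deformed factor $(q^{-2i+1}/a;q^{d^2r})_k=(q^{\,n-2i+1};q^{d^2r})_k$ vanishes once $k$ exceeds a bound of order $m$, so the sum terminates, and the same happens at $a=q^{n}$ by the symmetry $a\leftrightarrow a^{-1}$. It then remains to prove that these two terminating sums vanish identically. For this I would proceed as in the proof of Theorem~\ref{thm:a-2}: feed the terminating sum into Andrews' multiseries transformation \eqref{andrews} with $q\mapsto q^{d^2r}$ and a suitable assignment of $a,b_1,c_1,\dots,b_m,c_m,N$ built from the quantities $q^{\pm(2i-1)n}$ and $q^{d^2r}$ that appear, chosen so that the right-hand side of \eqref{andrews} picks up a vanishing factor of the form $(aq/b_mc_m;q^{d^2r})_N=0$. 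In cases amenable to the polynomial method of Theorem~\ref{thm:a-1}, one would instead extract from each ratio of $q$-shifted factorials a polynomial in $q^{d^2rk}$, factor out $(-1)^k q^{d^2r\binom{m-k}{2}}\begin{bmatrix}m\\k\end{bmatrix}_{q^{d^2r}}$, and verify that what is left has degree $\leqslant m-1$, whence the sum vanishes by the $q$-binomial identity \eqref{eq:qbino}.

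Finally, let $a\to 1$. Since $\gcd(n,d^2r)=1$, none of $d^2r,2d^2r,\dots,(n-1)d^2r$ is divisible by $n$, so the denominators $(aq^{d^2r},q^{d^2r}/a,q^{d^2r};q^{d^2r})_k$ occurring for $0\leqslant k\leqslant n-1$ stay coprime to $\Phi_n(q)$ in the limit; on the other hand $(1-aq^n)(a-q^n)$ specializes at $a=1$ to $(1-q^n)^2$, which is divisible by $\Phi_n(q)^2$. Hence the limit of the parametric congruence yields the asserted congruence modulo $\Phi_n(q)^2$.

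The main obstacle is the middle step: pinning down the precise parametrization of the deformed summand together with the parameters in \eqref{andrews} (or in Watson's $_8\phi_7$ transformation, or in a terminating well-poised summation) that simultaneously reproduces the $a=q^{\pm n}$ sum on one side and forces a zero on the other. This is delicate because, unlike in Theorems~\ref{thm:a-1} and~\ref{thm:a-2}, there is no very-well-poised prefactor $[2dk\pm1]$ available here, and because for $r\geqslant 2$ the $r$-fold repeated factors $(q^{-2i+1};q^{d^2r})_k^r$ make the series genuinely multidimensional, so one may need the full multiseries form of \eqref{andrews} with large $m$ rather than a single classical transformation; one must also check that the two factors chosen for deformation do not introduce spurious factors of $\Phi_n(q)$ as $a\to1$. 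As a limited alternative for small parameters (for instance $d=2$), one may instead seek an explicit closed form for the truncated sum, in the spirit of \eqref{induction-2}, and conclude by induction, but such a closed form does not appear to exist for general $d$ and $r$.
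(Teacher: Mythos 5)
The statement you are addressing is an open conjecture in the paper: the authors give no proof of Conjecture~\ref{conj:last-4}, and they explicitly concede that they cannot prove even the closely analogous Conjectures~\ref{conj:123} and \ref{conj:-1-2-3} beyond the modulus $\Phi_n(q)$; the only cases of \ref{conj:last-4} they settle are $d=1$ (via \eqref{eq:d-2} with $d\mapsto r$) and, for the sub-case $r=1$, $d=2$ (via an explicit closed form). So there is no proof to match your proposal against, and your proposal is not a proof either: its central step is, by your own admission, unexecuted.

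Beyond being incomplete, the specific deformation you propose does not work. You attach $a$ and $a^{-1}$ to a single pair of numerator factors and claim that at $a=q^{-n}$ the factor $(q^{-2i+1}/a;q^{d^2r})_k=(q^{\,n-2i+1};q^{d^2r})_k$ terminates. It does not: since $n\equiv 1\pmod{d^2r}$ and $n>1$, the exponent $n-2i+1$ is \emph{positive} for every $1\leqslant i\leqslant d$, so $(q^{\,n-2i+1};q^{d^2r})_k$ never vanishes. At $a=q^{n}$ the relevant exponent $-n-2i+1$ is negative but is congruent to $-2i$ modulo $d^2r$, which is not a multiple of $d^2r$ except in degenerate cases such as $(d,r)=(2,1)$, $i=2$; so again no termination. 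This is exactly why the paper's own parametric generalizations (see those of \eqref{eq:more-1} and \eqref{eq:more-2}, and the discussion in the introduction) insert \emph{higher} powers $a^{d-1}$, $a^{1-d}$, $a^{d-2}$, $a^{2-d}$, \dots, chosen precisely so that at $a=q^{\pm n}$ some exponent becomes a negative multiple of the base and the sum terminates. Even after correcting the parametrization in this way, the remaining task --- showing that the resulting terminating sum vanishes, via the degree count feeding into \eqref{eq:qbino} or via a suitable specialization of \eqref{andrews} --- is precisely the part the authors were unable to carry out; without it your argument establishes nothing. The final limiting step $a\to 1$ is standard and correctly described, but it rests entirely on the unproven (and, as formulated, false) parametric congruence.
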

Finally, we point out that Conjectures~\ref{conj:last-1}--\ref{conj:last-4}
are clearly true
for $d=1$ by the $d\mapsto r$ cases of \eqref{eq:d-1} and \eqref{eq:d-2}.

\vskip 5mm
\noindent{\bf Acknowledgments.} We thank Wadim Zudilin for helpful comments
on a previous version of this paper. We further thank the referees for their
careful reading of the manuscript; their comments led to improvements of
the exposition.

\end{document}